\title[\KM\ does not imply the class Fodor principle]{Kelley-Morse set theory does not prove\break the class Fodor principle}
\author{Victoria Gitman}
\address[V.~Gitman]{The City University of New York, CUNY Graduate Center, Mathematics Program, 365 Fifth Avenue, New York, NY 10016}
\email{vgitman@nylogic.org}
\urladdr{https://victoriagitman.github.io/}
\author{Joel David Hamkins}
 \address[J.~D.~Hamkins]
         {Professor of Logic, Oxford University \&\ Sir Peter Strawson Fellow, University College, Oxford. High Street Oxford OX1 4BH U.K.}
\email{joeldavid.hamkins@philosophy.ox.ac.uk}
\urladdr{http://jdh.hamkins.org}
\author{Asaf Karagila}
\address[A.~Karagila]{School of Mathematics University of East Anglia. Norwich, NR4~7TJ, UK}
\email{karagila@math.huji.ac.il}
\urladdr{http://karagila.org/}
\thanks{The third author was partially supported by the Austrian Science Foundation
(FWF), Grant~I~3081, and by the Royal Society grant no.~NF170989.}
\thanks{Commentary about this article can be made at \href{http://jdh.hamkins.org/km-does-not-prove-class-fodor}{http://jdh.hamkins.org/km-does-not-prove-class-fodor}.}
\newtheorem{theorem}{Theorem}
\newtheorem*{theorem*}{Theorem}
\newtheorem*{maintheorem*}{Main Theorem}
\newtheorem*{maintheorems*}{Main Theorems}
\newtheorem{corollary}[theorem]{Corollary}
\newtheorem*{corollary*}{Corollary}
\newtheorem*{corollaries*}{Corollaries}
\newtheorem{lemma}[theorem]{Lemma}
\newtheorem{question}[theorem]{Question}
\newtheorem*{question*}{Question}
\newtheorem*{questions*}{Questions}
\newtheorem*{mainquestion*}{Main Question} 
\newtheorem*{openquestion*}{Open Question} 
\newtheorem{definition}[theorem]{Definition}
\newcommand{\QED}{\end{proof}}
\def\proclaim[#1]{{\bf #1}}
\def\BF#1.{{\bf #1.}}
\def\says#1:#2\par{\item[#1] #2\par}
\newcommand{\Los}{\L o\'s}
\newcommand{\Godel}{G\"odel}
\renewcommand{\P}{{\mathbb P}}
\newcommand{\Q}{{\mathbb Q}}
\newcommand{\one}{\mathbbm{1}} 
\newcommand{\dotminus}{\mathbin{\text{\@dotminus}}}
\newcommand{\@dotminus}{%
  \ooalign{\hidewidth\raise1ex\hbox{.}\hidewidth\cr$\m@th-$\cr}%
}
\newcommand{\of}{\subseteq}
\newcommand{\fo}{\supseteq}
\newcommand{\set}[1]{\{\,{#1}\,\}}
\newcommand{\singleton}[1]{\left\{{#1}\right\}}
\newcommand{\elesub}{\prec}
\newcommand{\dom}{\mathop{\rm dom}}
\newcommand{\Cof}{\mathop{\rm Cof}}
\newcommand{\Add}{\mathop{\rm Add}}
\newcommand{\Con}{\mathop{{\rm Con}}}
\newcommand{\restrict}{\upharpoonright} 
\newcommand{\satisfies}{\models}
\newcommand{\forces}{\Vdash}
\newcommand{\union}{\cup}
\newcommand{\Union}{\bigcup}
\newcommand{\intersect}{\cap}
\newcommand{\Intersect}{\bigcap}
\newcommand{\smalllt}{\mathrel{\mathchoice{\raise2pt\hbox{$\scriptstyle<$}}{\raise1pt\hbox{$\scriptstyle<$}}{\raise0pt\hbox{$\scriptscriptstyle<$}}{\scriptscriptstyle<}}}
\newcommand{\smallleq}{\mathrel{\mathchoice{\raise2pt\hbox{$\scriptstyle\leq$}}{\raise1pt\hbox{$\scriptstyle\leq$}}{\raise1pt\hbox{$\scriptscriptstyle\leq$}}{\scriptscriptstyle\leq}}}
   \def\DHLhksqrt#1#2{%
   \setbox0=\hbox{$#1\sqrt{#2\,}$}\dimen0=\ht0
   \advance\dimen0-0.2\ht0
   \setbox2=\hbox{\vrule height\ht0 depth -\dimen0}%
   {\box0\lower0.4pt\box2}}
\newcommand{\boolval}[1]{\mathopen{\lbrack\!\lbrack}\,#1\,\mathclose{\rbrack\!\rbrack}}
\def\[#1]{\boolval{#1}}
\newbox\gnBoxA
\newbox\gnBoxB
\newdimen\gnCornerHgt
\newdimen\gnArgHgt
\def\gcode #1{%
\setbox\gnBoxA=\hbox{$#1$}%
\setbox\gnBoxB=\hbox{$\bar #1$}%
\gnArgHgt=\ht\gnBoxB%
\ifnum     \gnArgHgt<\gnCornerHgt \gnArgHgt=0pt%
\else \advance \gnArgHgt by -\gnCornerHgt%
\fi \raise\gnArgHgt\hbox{\tiny$\ulcorner$} \box\gnBoxA %
\raise\gnArgHgt\hbox{\tiny$\urcorner$}}
\newcommand{\UnderTilde}[1]{{\setbox1=\hbox{$#1$}\baselineskip=0pt\vtop{\hbox{$#1$}\hbox to\wd1{\hfil$\sim$\hfil}}}{}}
\newcommand{\Undertilde}[1]{{\setbox1=\hbox{$#1$}\baselineskip=0pt\vtop{\hbox{$#1$}\hbox to\wd1{\hfil$\scriptstyle\sim$\hfil}}}{}}
\newcommand{\undertilde}[1]{{\setbox1=\hbox{$#1$}\baselineskip=0pt\vtop{\hbox{$#1$}\hbox to\wd1{\hfil$\scriptscriptstyle\sim$\hfil}}}{}}
\newcommand{\UnderdTilde}[1]{{\setbox1=\hbox{$#1$}\baselineskip=0pt\vtop{\hbox{$#1$}\hbox to\wd1{\hfil$\approx$\hfil}}}{}}
\newcommand{\Underdtilde}[1]{{\setbox1=\hbox{$#1$}\baselineskip=0pt\vtop{\hbox{$#1$}\hbox to\wd1{\hfil\scriptsize$\approx$\hfil}}}{}}
\renewcommand{\implies}{\mathrel{\rightarrow}}
\def\<#1>{\left\langle#1\right\rangle}
\newcommand{\val}{\mathop{\rm val}\nolimits}
\newcommand{\ORD}{\mathord{{\rm Ord}}}
\newcommand{\Ord}{\mathord{{\rm Ord}}}
\newcommand{\ETR}{{\rm ETR}}
\newcommand\ETRord{\ETR_{\Ord}}
\newcommand{\ZFC}{{\rm ZFC}}
\newcommand{\ZF}{{\rm ZF}}
\newcommand{\KM}{{\rm KM}}
\newcommand{\GB}{{\rm GB}}
\newcommand{\GBC}{{\rm GBC}}
\newcommand{\AC}{{\rm AC}}
\newcommand{\DC}{{\rm DC}}
\newcommand{\CC}{{\rm CC}}
\newcommand{\cell}[1]{\boxit{\hbox to 17pt{\strut\hfil$#1$\hfil}}}
\newcommand{\head}[2]{\lower2pt\vbox{\hbox{\strut\footnotesize\it\hskip3pt#2}\boxit{\cell#1}}}
\newcommand{\boxit}[1]{\setbox4=\hbox{\kern2pt#1\kern2pt}\hbox{\vrule\vbox{\hrule\kern2pt\box4\kern2pt\hrule}\vrule}}
\newcommand{\Col}[3]{\hbox{\vbox{\baselineskip=0pt\parskip=0pt\cell#1\cell#2\cell#3}}}
\newcommand{\tapenames}{\raise 5pt\vbox to .7in{\hbox to .8in{\it\hfill input: \strut}\vfill\hbox to
.8in{\it\hfill scratch: \strut}\vfill\hbox to .8in{\it\hfill output: \strut}}}
\newcommand{\Head}[4]{\lower2pt\vbox{\hbox to25pt{\strut\footnotesize\it\hfill#4\hfill}\boxit{\Col#1#2#3}}}
\newcommand{\Dots}{\raise 5pt\vbox to .7in{\hbox{\ $\cdots$\strut}\vfill\hbox{\ $\cdots$\strut}\vfill\hbox{\
$\cdots$\strut}}}
\newcommand{\less}{{<}}
\mathchardef\mhyphen="2D  
\begin{document}

\begin{abstract}
We show that Kelley-Morse set theory does not prove the class Fodor principle, the assertion that every regressive class function $F:S\to\Ord$ defined on a stationary class $S$ is constant on a stationary subclass. Indeed, for every $\omega\leq\lambda\leq\ORD$, it is relatively consistent with \KM\ that there is a class function $F:\Ord\to\lambda$ that is not constant on any stationary class, and moreover $\lambda$ is the least ordinal for which such a counterexample function exists. As a corollary of this result, it is consistent with \KM\ that there is a class $A\of\omega\times\Ord$, such that each section $A_n=\set{\alpha\mid (n,\alpha)\in A}$ contains a class club, but $\Intersect_n A_n$ is empty. Consequently, it is relatively consistent with \KM\ that the class club filter is not $\sigma$-closed.
\end{abstract}

\maketitle

\section{Introduction}\label{Section.Introduction}

The \emph{class Fodor principle} is the assertion that every regressive class function $F:S\to\Ord$ defined on a stationary class $S$ is constant on a stationary subclass of $S$. This statement can be expressed in the usual second-order language of set theory, and the principle can therefore be sensibly considered in the context of any of the various second-order set-theoretic systems, such as \Godel-Bernays \GBC\ set theory or Kelley-Morse \KM\ set theory. Just as with the classical Fodor's lemma in first-order set theory, the class Fodor principle is equivalent, over a weak base theory, to the assertion that the class club filter is normal (see theorem~\ref{th:FodorEquivNormalFilter}). We shall investigate the strength of the class Fodor principle and try to find its place within the natural hierarchy of second-order set theories. We shall also define and study weaker versions of the class Fodor principle.

If one tries to prove the class Fodor principle by adapting one of the classical proofs of the first-order Fodor's lemma, then one inevitably finds oneself needing to appeal to a certain second-order class-choice principle, which goes beyond the axiom of choice and the global choice principle, and which is not available in Kelley-Morse set theory \cite{GitmanHamkins:Kelley-MorseSetTheoryAndChoicePrinciplesForClasses}. For example, in one standard proof, we would want for a given $\ORD$-indexed sequence of non-stationary classes to be able to choose for each member of it a class club that it misses. This would be an instance of class-choice, since we seek to choose classes here, rather than sets. The class choice principle $\CC(\Pi^0_2)$, it turns out, is sufficient for us to make these choices, for this principle states that if every ordinal $\alpha$ admits a class $A$ witnessing a $\Pi^0_2$-assertion $\varphi(\alpha,A)$, allowing class parameters, then there is a single class $B\subseteq \Ord\times V$, whose slices $B_\alpha$ witness $\varphi(\alpha,B_\alpha)$; and the property of being a class club avoiding a given class is $\Pi^0_2$ expressible.

Thus, the class Fodor principle, and consequently also the normality of the class club filter, is provable in the relatively weak second-order set theory $\GBC+\CC(\Pi^0_2)$, as shown in theorem \ref{Theorem.CC-implies-Fodor}. This theory is known to be weaker in consistency strength than the theory $\GBC+\Pi^1_1$-comprehension, which is itself strictly weaker in consistency strength than \KM. But meanwhile, although the class choice principle is weak in consistency strength, it is not actually provable in \KM; indeed, even the weak fragment $\CC(\Pi^0_1)$ is not provable in \KM. Those results were proved several years ago by the first two authors \cite{GitmanHamkins:Kelley-MorseSetTheoryAndChoicePrinciplesForClasses}. In light of those results, however, one should perhaps not have expected to be able to prove the class Fodor principle in \KM.

Indeed, it follows similarly from arguments of the third author in \cite{Karagila2018:Fodors-lemma-can-fail-everywhere} that if $\kappa$ is an inaccessible cardinal, then there is a forcing extension $V[G]$ with a symmetric submodel $M$ such that $V_\kappa^M=V_\kappa$, which implies that $\mathcal M=(V_\kappa,\in, V^M_{\kappa+1})$ is a model of Kelley-Morse, and in $\mathcal M$, the class Fodor principle fails in a very strong sense.


\begin{maintheorem*}
Kelley-Morse set theory \KM, if consistent, does not prove the class Fodor principle. Indeed, every model of $\KM$ can be extended to a model of $\KM$ with the same first-order part, but with a class function $F:\ORD\to \omega$, which is not constant on any stationary class; in this model, therefore, the class club filter is not $\sigma$-closed.
\end{maintheorem*}
\noindent The proof is a combination of the ideas from \cite{Karagila2018:Fodors-lemma-can-fail-everywhere} and the methods of \cite{GitmanHamkins:Kelley-MorseSetTheoryAndChoicePrinciplesForClasses}, as well as new results on class club shooting.

We shall also investigate various weak versions of the class Fodor principle.

\begin{definition}\
\textup{\begin{enumerate}
\item For a cardinal $\kappa$, the \emph{class $\kappa$-Fodor principle} asserts that every class function $F:S\to\kappa$ defined on a stationary class $S\of\Ord$ is constant on a stationary subclass of $S$.
\item The \emph{class $\less\Ord$-Fodor principle} is the assertion that the $\kappa$-class Fodor principle holds for every cardinal $\kappa$.
\item The \emph{bounded class Fodor principle} asserts that every regressive class function $F:S\to\Ord$ on a stationary class $S\of\Ord$ is bounded on a stationary subclass of $S$.
\item The \emph{very weak class Fodor principle} asserts that every regressive class function $F:S\to\Ord$ on a stationary class $S\of\Ord$ is constant on an unbounded subclass of $S$.
\end{enumerate}}
\end{definition}

We shall separate these principles as follows.

\begin{theorem} Suppose $\KM$ is consistent.
\begin{enumerate}
\item There is a model of $\KM$ in which the class Fodor principle fails, but the class $\less\Ord$-Fodor principle holds.
\item There is a model of $\KM$ in which the class $\omega$-Fodor principle fails, but the bounded class Fodor principle holds.
\item There is a model of $\KM$ in which the class $\omega$-Fodor principle holds, but the bounded class Fodor principle fails.
\item $\GB^-$ proves the very weak class Fodor principle.
\end{enumerate}
\end{theorem}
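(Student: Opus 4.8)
My plan is to prove part~(4) directly in $\GB^-$, with no forcing, and to obtain the three separations~(1)--(3) as variants of the symmetric class-forcing construction underlying the Main Theorem, tuning in each case the automorphism group and the support ideal so as to control exactly which ``bad array'' becomes symmetric. For part~(4), let $F\colon S\to\Ord$ be regressive on a stationary class $S$ and suppose toward contradiction that no fiber $S_\gamma=\set{\alpha\in S\mid F(\alpha)=\gamma}$ is unbounded; then each $S_\gamma$ is a bounded class of ordinals, hence a set. The key point is that in $\GB^-$ the class function $H\colon\Ord\to\Ord$ given by $H(\beta)=\bigl(\sup\,T_\beta\bigr)+1$, where $T_\beta=\set{\alpha\in S\mid F(\alpha)<\beta}=\Union_{\gamma<\beta}S_\gamma$, is well defined: $T_\beta$ is a set by class replacement and separation, being bounded by $\sup_{\gamma<\beta}(\sup S_\gamma)$. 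The closure points $C=\set{\beta\mid \forall\gamma<\beta\ (H(\gamma)<\beta)}$ of $H$ form a class club, provably in $\GB^-$ and with no appeal to choice, so $C\cap\Lim$ is a class club and therefore meets the stationary class $S$. But if $\beta\in C\cap\Lim\cap S$ and $\gamma=F(\beta)<\beta$, then $\gamma+1<\beta$ gives $\sup T_{\gamma+1}<\beta$ by closure, while $\beta\in S_\gamma\subseteq T_{\gamma+1}$ gives $\beta\leq\sup T_{\gamma+1}$, a contradiction. Hence some fiber $S_\gamma$ is unbounded and $F$ is constant on it, which is exactly the very weak class Fodor principle.

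For the separations I would start from the forcing and symmetric submodel used to prove the Main Theorem, which adds a class $A\of\lambda\times\Ord$ whose sections $A_\xi$ each provably contain a class club, while $\Intersect_{\xi<\lambda}A_\xi$ is empty, the essential feature being that although each section contains a club, the symmetry prevents any single class from selecting such clubs uniformly. This is precisely the failure of $\CC(\Pi^0_1)$ isolated in \cite{GitmanHamkins:Kelley-MorseSetTheoryAndChoicePrinciplesForClasses} (compare corollary~\ref{Corollary.GitmanHamkins}), and through theorem~\ref{th:FodorEquivNormalFilter} it is what links bad arrays of a given width to the failure of the corresponding Fodor principle. The separations then come from calibrating the width $\lambda$ of the array that is forced in, together with the support ideal defining the allowable classes, so that \emph{only} the intended counterexample is symmetric while every family of the other shape factors through a sub-extension in which the hypotheses of theorem~\ref{Theorem.CC-implies-Fodor} hold; I would accordingly begin from a model of $\KM$ already satisfying $\CC(\Pi^0_1)$, and hence the full class Fodor principle, and destroy only the intended instance.

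Concretely, for~(1) I would run the construction at width $\lambda=\Ord$, so that $A\of\Ord\times\Ord$ witnesses the failure of the class Fodor principle (equivalently, the non-normality of the class club filter), while arranging that every class $B\of\kappa\times\Ord$ for a set cardinal $\kappa$ is set-supported, hence lies in a set-forcing sub-extension over which $\CC(\Pi^0_1)$ is preserved; there the witnessing clubs can be selected and $\Intersect_{\xi<\kappa}B_\xi$ contains a club, which is upward absolute, so the class $\less\Ord$-Fodor principle survives. For~(2) I would force a single bad array of width $\omega$, making the class $\omega$-Fodor principle fail, while ensuring that every \emph{regressive} class function $F\colon S\to\Ord$ in the symmetric model is set-supported and hence refuted in a choice-respecting sub-extension, so that the bounded class Fodor principle holds; here one exploits that an $\omega$-valued regressive function is automatically bounded and therefore never refutes bounded Fodor. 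For~(3) I would dually preserve $\sigma$-closure of the class club filter, so that the class $\omega$-Fodor principle holds, while forcing in a single ``tall'' object---an increasing $\Ord$-chain of nonstationary classes covering a stationary class, equivalently a regressive $F\colon S\to\Ord$ unbounded on every stationary subclass---so that the bounded class Fodor principle fails; one checks that such a tall object yields no bad $\omega$-indexed family, since the union of $\omega$-many of its nonstationary initial pieces remains set-bounded in range.

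The main obstacle, common to all three, is the reconciliation at the heart of the support design: one must show, via approximation and factoring lemmas for the symmetric submodel in the style of \cite{GitmanHamkins:Kelley-MorseSetTheoryAndChoicePrinciplesForClasses} and \cite{Karagila2018:Fodors-lemma-can-fail-everywhere}, that the one global object one wants to \emph{keep} bad is genuinely invariant under the automorphism group, while every family of the competing shape is set-supported and hence factors through a sub-extension with enough class choice to run the positive Fodor argument of theorem~\ref{Theorem.CC-implies-Fodor}. This is delicate precisely because the global counterexample touches unboundedly many coordinates yet must coexist with the non-existence of set-width counterexamples, so the support ideal cannot simply be the ideal of sets. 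Separating the bounded class Fodor principle from the class $\omega$-Fodor principle in~(2) and~(3) is the most demanding case, since the two principles concern differently shaped families---an $\omega$-indexed disjoint partition versus an $\Ord$-indexed increasing cover---and verifying that the unwanted shape cannot be symmetric, while the wanted one is, is where the genuine work lies.
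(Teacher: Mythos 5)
Your part~(4) is correct and is essentially the paper's own proof: where the paper takes closure points of the map $\gamma\mapsto\beta_\gamma$ bounding each fiber, you take closure points of $\beta\mapsto\sup T_\beta+1$ for the cumulative unions $T_\beta$; the contradiction at a (limit) closure point lying in $S$ is the same, and the argument is valid in $\GB^-$.

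For parts~(1)--(3) your architecture matches the paper's --- a generic regressive $F\colon\Ord\to\omega$ or $F\colon\Ord\to\Ord$, club-shooting through the classes $B_\xi=\set{\gamma\mid F(\gamma)>\xi}$, and the symmetric submodel $\mathcal V^+$ of classes added at some bounded stage --- but the step you defer to ``approximation and factoring lemmas'' is not a deferred detail; it is broken as you state it, because the relevant properties do not transfer in the directions you need. Stationarity is downward absolute from $\mathcal V^+$ to a stage model $\mathcal V[F][G\restrict n]$ (the stage has fewer clubs), but it is \emph{not} upward absolute: the later club-shooting stages are designed precisely to destroy stationarity (the club shot through $B_n$ is disjoint from $F^{-1}(n)$, killing its stationarity). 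So when you factor a set-width function into a choice-respecting sub-extension and apply theorem~\ref{Theorem.CC-implies-Fodor} there, the stationary homogeneous class you obtain is stationary only in that sub-extension, and you have no warrant that it remains stationary in $\mathcal V^+$. Dually, ``contains a class club'' is upward but not downward absolute, so for the club-filter formulation you cannot assume the clubs witnessing a $\kappa$-indexed family in $\mathcal V^+$ live in any single stage: choosing a stage $\alpha_\xi$ for each $\xi<\kappa$ and bounding them is itself an instance of class choice, which is exactly what is at issue.

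The paper closes this gap with two devices absent from your sketch. First, for the models giving (1) and (3), it proves outright that $\CC_\kappa$ holds \emph{in} $\mathcal V^+$ for every cardinal $\kappa$: working in a stage model $\mathcal V[F][G\restrict\gamma]\models\KM+\CC$ containing the parameter, it uses the definable forcing relation for the tail forcing (augmented with the predicate $\check{\mathscr C}$), uses $\less\Ord$-distributivity to pass to a single condition forcing specific witnessing names, and then applies $\CC_\kappa$ of the stage model to code those names and bound their stages; $\less\Ord$-Fodor then follows internally in $\mathcal V^+$ by theorem~\ref{Theorem.CC-implies-Fodor}, with no transfer of stationarity between models. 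Second, for (2), bounded Fodor in $\mathcal V^+$ is proved by a homogeneity argument: if no $S_\alpha=\set{\gamma\mid H(\gamma)<\alpha}$ were stationary in $\mathcal V^+$, local homogeneity of the product makes the stage $n_\alpha$ at which $S_\alpha$ becomes non-stationary independent of the generic, so $\alpha\mapsto n_\alpha$ is a class at the stage where $H$ appears; by monotonicity of the $S_\alpha$, a single stage $n$ works for all $\alpha$, and then $H$ violates Fodor inside $\mathcal V[F][G\restrict n]\models\KM+\CC$, a contradiction. Note that both arguments use full $\CC$ (or at least $\CC_\kappa$) in the ground model, not merely $\CC(\Pi^0_1)$ as you propose; this is harmless for the theorem, since $\KM$ and $\KM+\CC$ are equiconsistent, but your weaker base would not support the intermediate-model choice steps. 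Finally, two small corrections: the stage models here are class-forcing, not set-forcing, extensions; and in the paper's construction the bad array is symmetric simply because it is definable from the first-stage generic $F$, not because of any calibration of a support ideal.
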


Finally, we show that the class Fodor principle can be neither created nor destroyed by set forcing.

\begin{theorem}
 The class Fodor principle is invariant by set forcing over models of $\GBC^-$. That is, it holds in an extension if and only if it holds in the ground model.
\end{theorem}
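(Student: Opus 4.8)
The plan is to prove the two implications separately, exploiting that set forcing with a poset $\P\in V$ fixes the ordinals, preserves $\GBC^-$ (so that $V[G]$ again enjoys global choice), and sends class clubs of $V$ to class clubs of $V[G]$ (closure and unboundedness of a class of ordinals are absolute between $V$ and $V[G]$). The workhorse I would isolate first is a \emph{class club reflection lemma}: if $\one\forces$ ``$\dot C\of\Ord$ is a class club'', then there is a class club $D\in V$ with $\one\forces\check D\of\dot C$. I would prove this exactly as one proves that clubs of the extension contain ground-model clubs for small forcing, the point being that $\P$ is a \emph{set} and hence has no proper-class antichains: for each $\alpha$ one lets $\beta(\alpha)$ be least so that $\{q\mid \exists\xi\in(\alpha,\beta(\alpha)),\ q\forces\check\xi\in\dot C\}$ is dense below $\one$ (such a $\beta(\alpha)$ exists because $\P$ is a set), and one takes $D$ to be the class of limit closure points of $\alpha\mapsto\beta(\alpha)$; genericity together with closedness of $\dot C$ then gives $\one\forces\check D\of\dot C$. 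Because $\beta(\alpha)$, and hence $D$, is defined canonically from the (definable) forcing relation on the name $\dot C$, the construction is uniform: a parametrized name $\dot C_\gamma$ yields a single reflecting class $\{(\gamma,\alpha)\mid\alpha\in D_\gamma\}\in V$. An immediate corollary is \emph{stationarity preservation}: a class $S\in V$ is stationary in $V$ iff it is stationary in $V[G]$ (one inclusion because ground clubs remain clubs, the other by reflecting an extension club into a ground club that $S$ must meet).

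For the easy direction, suppose the class Fodor principle holds in $V[G]$ and let $F\colon S\to\Ord$ in $V$ be regressive with $S\of\Ord$ stationary in $V$. Then $S$ is stationary in $V[G]$ and $F$ is still regressive there, so by Fodor in $V[G]$ there is a subclass $T\of S$, stationary in $V[G]$, with $F\restrict T$ constantly $\gamma$. The class $T'=\{\alpha\in S\mid F(\alpha)=\gamma\}$ lies in $V$, contains $T$, is therefore stationary in $V[G]$, and hence stationary in $V$ by stationarity preservation; and $F\restrict T'$ is constantly $\gamma$. So Fodor holds in $V$.

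For the converse I would pass through normality of the class club filter, which is equivalent to the class Fodor principle by Theorem~\ref{th:FodorEquivNormalFilter}. Writing $\triangle B=\{\alpha\mid \forall\gamma<\alpha\ (\alpha\in B_\gamma)\}$ for the diagonal intersection, normality asserts that whenever $B\of\Ord\times\Ord$ has every slice $B_\gamma$ containing a class club, $\triangle B$ contains a class club. So assume normality in $V$, let $B\in V[G]$ have all slices containing class clubs, and (after reducing to a condition forcing this) invoke a \emph{class mixing lemma} to obtain a single name $\dot{\mathcal C}$ with $\one\forces$ ``for every $\gamma$, $\dot{\mathcal C}_\gamma$ is a class club contained in $\dot B_\gamma$''. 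Applying the uniform reflection lemma slice-by-slice produces a single class $B'\in V$ whose slices $B'_\gamma=D_\gamma$ are ground-model class clubs with $\one\forces \check D_\gamma\of\dot B_\gamma$. Normality in $V$ gives a class club $D^*\in V$ with $D^*\of\triangle B'$; since $B'_\gamma\of B_\gamma$ holds in $V[G]$ we get $\triangle B'\of\triangle B$ by monotonicity, and $D^*$ remains a class club in $V[G]$, so $\triangle B$ contains a class club there. Thus normality, and hence the class Fodor principle, holds in $V[G]$.

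The main obstacle is the class mixing lemma in the last paragraph: I must uniformize, over a proper class of coordinates $\gamma$, the choice of a club sitting \emph{inside} the slice $B_\gamma$, and no such club can be selected canonically from $B_\gamma$ alone (its club of limit points need not lie inside $B_\gamma$). This is precisely an instance of class choice, the very principle the rest of the paper shows may fail; the resolution is that over $\GBC^-$ we have a global well-order of $V$, which lets one, for each $\gamma$ and each condition $q$, pick least names and mix them across the (set-sized) antichains of $\P$, assembling the results into one class name $\dot{\mathcal C}$. This is exactly where the hypothesis $\GBC^-$ (global choice) is used, and it explains why the theorem is stated over that base theory rather than over $\GB^-$. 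The remaining ingredients---definability of the set-forcing relation for class names over $\GBC^-$ and the preservation of $\GBC^-$ by set forcing---are standard, and I would simply cite them.
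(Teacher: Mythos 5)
Your first three paragraphs are sound and match the paper's own machinery: the reflection lemma for club names (the paper proves the same preliminary fact, that every new class club contains a ground-model class club), the resulting two-way preservation of stationarity for ground-model classes, and the direction ``Fodor in $V[G]$ implies Fodor in $V$'' all go through. The genuine gap is the ``class mixing lemma,'' and your proposed resolution of it does not work. A global well-order of $V$ well-orders the \emph{sets}; but a name for a class club is itself a proper class (a class of pairs $(\check\alpha,p)$), and $\GBC^-$ provides no well-order of the hyperclass of such names from which a ``least'' one could be extracted. Choosing one club name for each of the proper-class-many indices $\gamma$ --- or even one per element of a single set-sized maximal antichain of $\P$ --- is an instance of the class-choice principle $\CC$ (indeed of the very instances $\CC(\Pi^0_1)$ and $\CC_\omega(\Pi^0_1)$, choosing class clubs witnessing $\Pi^0_1$ facts), which by corollary \ref{Corollary.GitmanHamkins} is not provable even in $\KM$, let alone in $\GBC^-$; and the hypothesis that Fodor holds in $V$ does not hand you such choices of names either. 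The paper flags exactly this obstruction in the proof of theorem \ref{th:FodorEquivNormalFilter}: one cannot in general choose the clubs inside classes that are merely known to contain clubs. So as written, the direction ``Fodor in $V$ implies Fodor in $V[G]$'' is unproved.

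The repair is to arrange that everything you ever choose is a \emph{set}, so that $\AC$ plus the definable forcing relation suffices; this is what the paper does, bypassing normality entirely. Given names $\dot S,\dot F$ for a regressive function on a stationary class of the extension, work in $V$ and for each ordinal $\alpha$ pick (via a well-order of the set $\P$) a condition $p_\alpha$ forcing $\check\alpha\in\dot S$, when possible, and deciding the ordinal value of $\dot F(\check\alpha)$. The class $\alpha\mapsto p_\alpha$ is defined on a class that is stationary in $V$, takes values in the set $\P$, so ground-model Fodor makes it constant, say equal to $p$, on a stationary $S_0$; a second application of ground-model Fodor to the regressive class function $\alpha\mapsto(\text{value of }\dot F(\check\alpha)\text{ decided by }p)$ gives a stationary $S_1\of S_0$ on which $p$ forces $\dot F$ to be constant, and $S_1$ stays stationary in $V[G]$ by your reflection lemma; relativizing below an arbitrary condition finishes. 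If you insist on your normality route, it can be salvaged without any mixing by the same trick: show that $E_\gamma=\set{\alpha\mid p\forces\check\alpha\in\dot B_\gamma}$ contains a club for each $\gamma$ (if $\Ord\setminus E_\gamma$ were stationary, stabilize, by ground Fodor, the conditions $q_\alpha\leq p$ forcing $\check\alpha\notin\dot B_\gamma$ to a single $q$, contradicting that this stationary class survives into $V[G]$ and must meet the club inside $B_\gamma$); the classes $E_\gamma$ are uniformly definable from the forcing relation, so ground-model normality applies to them. But note that this salvage is precisely the paper's maneuver --- decide membership by conditions and stabilize with Fodor --- rather than the mixing lemma you proposed.
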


Let us conclude the introduction by mentioning the following easy negative instance of the class Fodor principle for certain \GBC\ models. This argument seems to be a part of set-theoretic folklore. Namely, consider an $\omega$-standard model of \GBC\ set theory $M$ having no $V_\kappa^M$ that is a model of \ZFC. A minimal transitive model of \ZFC, for example, has this property. Inside $M$, let $F(\kappa)$ be the least $n$ such that $V_\kappa^M$ fails to satisfy $\Sigma_n$-collection. This is a definable class function $F:\Ord^M\to\omega$ in $M$, but it cannot be constant on any stationary class in $M$, because by the reflection theorem, for every $n<\omega$, there is a class club of cardinals $\kappa$ such that $V_\kappa^M$ satisfies $\Sigma_n$-collection.

\section{Second-order set theories}\label{sec:second-order}

We shall express all our second-order set-theories in a two-sorted language, with separate variables for sets and classes, and a binary relation $\in$ used both for the membership relation on sets $a\in b$ and also for membership of a set in a class $a\in A$. Models of these theories, in the Henkins semantics, can be taken to have the form $\mathcal M=\< M,\in^{\mathcal M},\mathscr C>$, where $M$ is the collection of sets in the model and $\mathscr C$ is the collection of classes $A$ with $A\subseteq M$.

Let us briefly review the principal second-order set theories we shall consider. \Godel-Bernays set theory \GB\ asserts $\ZF$ for the first-order part of the structure, plus extensionality for classes, the replacement axiom for class functions, and the class-comprehension axiom, which asserts that every first-order formula, allowing set and class parameters, defines a class. The theory $\GB+\AC$ includes the axiom of choice for sets, and the theory \GBC\ includes the global choice principle, which asserts that there is a well-order of the sets in type $\Ord$.
The natural weakenings $\GB^-$ and $\GBC^-$ drop the power set axiom, asserting only $\ZF^-$ for the first-order part (when dropping the power set axiom, be sure to use the collection and separation schemes in $\ZF^-$, instead of just replacement, in light of \cite{GitmanHamkinsJohnstone2016:WhatIsTheTheoryZFC-Powerset?}).

A hierarchy of theories grows above \GBC\ by strengthening the class comprehension axiom into the second-order language. The theory $\GBC+\Pi^1_n$-comprehension, for example, allows class comprehension for $\Pi^1_n$-expressible properties, with class parameters. Growing in power as $n$ increases, this hierarchy culminates ultimately in Kelley-Morse set theory \KM, which is \GBC\ together with the full second-order class comprehension scheme.

Recent work has revealed a central role in the hierarchy for the principle of elementary transfinite recursion \ETR, which asserts that every first-order definable class recursion along a class well-order, including orders longer than $\Ord$, has a solution (see \cite{GitmanHamkins2016:OpenDeterminacyForClassGames, GitmanHamkinsHolySchlichtWilliams:The-exact-strength-of-the-class-forcing-theorem}). The principle is naturally stratified according to the length of the class well order. The principle $\ETRord$, for example, asserting solutions for class recursions of length $\Ord$, is equivalent to the assertion that every class forcing notion admits a forcing relation \cite{GitmanHamkinsHolySchlichtWilliams:The-exact-strength-of-the-class-forcing-theorem}; and the principle $\ETR_\omega$, for class recursions of length $\omega$, is equivalent to the assertion that every structure $\<V,\in,A>$, for any class $A$, admits a first-order truth predicate; the Tarskian recursive definition of truth is, after all, a recursion of length $\omega$ on formulas\footnote{The converse direction was pointed out to the second author by Kentaro Fujimoto in a private communication.}. In this way, the restricted $\ETR$ principles span the region between $\GBC$ and $\GBC+\Pi^1_1$-comprehension, which proves $\ETR$.

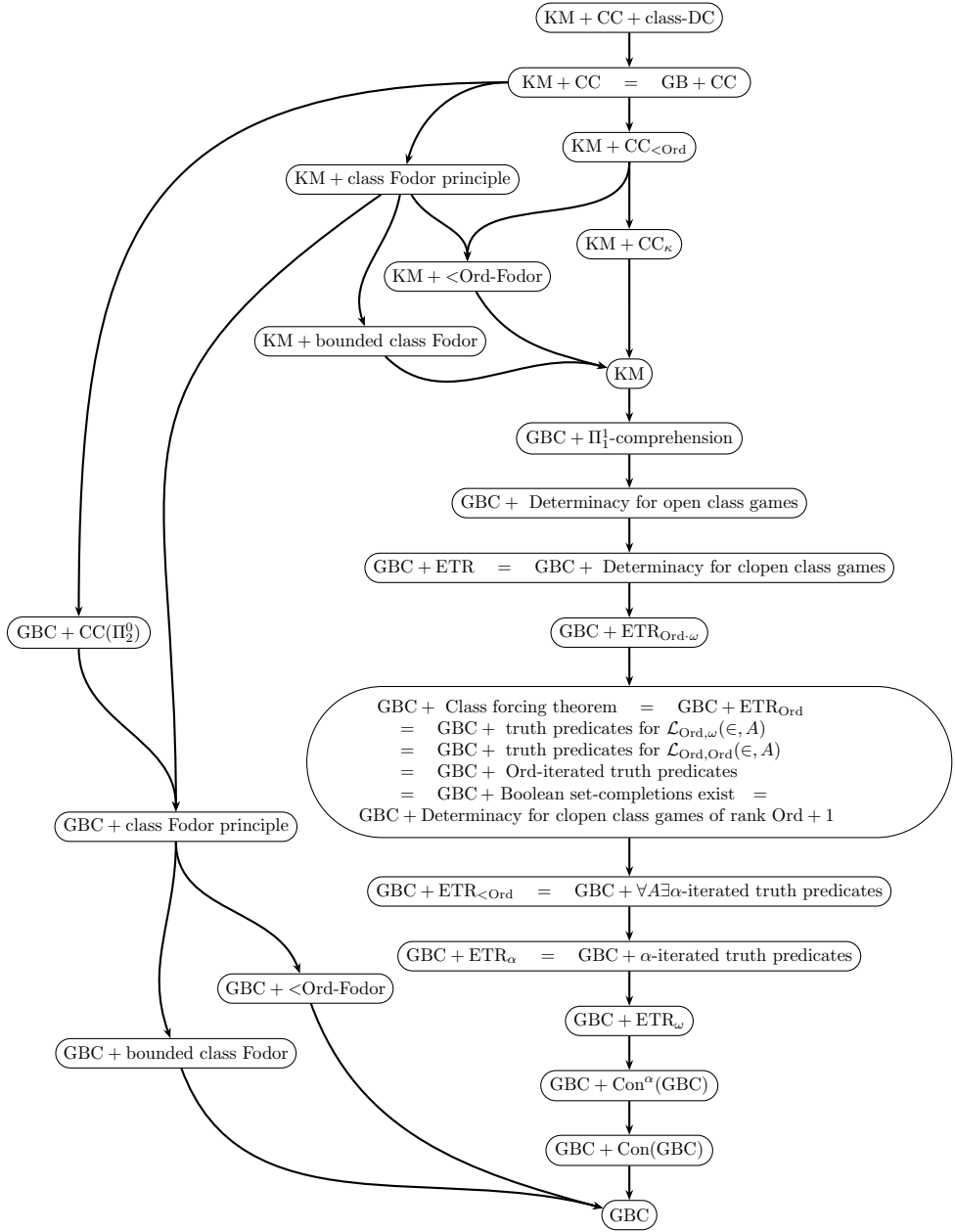
\begin{figure}[h]
  \begin{tikzpicture}[theory/.style={draw,rounded rectangle,scale=.7,minimum height=5.7mm},scale=.44]
     \draw (0:0) node[theory] (GBC) {$\GBC$}
           ++(90:2) node[theory] (ConZFC) {$\GBC+\Con(\GBC)$}
           ++(90:2) node[theory] (ConalphaZFC) {$\GBC+\Con^\alpha(\GBC)$}
           ++(90:2) node[theory] (ETRomega) {$\GBC+\ETR_\omega$}
           ++(90:2) node[theory] (ETRalpha) {$\GBC+\ETR_\alpha\quad=\quad\GBC+\alpha\text{-iterated truth predicates}$}
           ++(90:2) node[theory] (ETR<Ord) {$\GBC+\ETR_{<\Ord}\quad=\quad \GBC+\forall A\exists \alpha\text{-iterated truth predicates}$}
           ++(90:4) node[theory] (ETRord) {\parbox[c][2.7cm]{10.5cm}{{\quad}$\GBC+\text{ Class forcing theorem}\quad=\quad \GBC+\ETRord$\\
            ${\qquad}=\quad\GBC+\text{ truth predicates for }\mathcal{L}_{\Ord,\omega}(\in,A)$\\
            ${\qquad}=\quad\GBC+\text{ truth predicates for }\mathcal{L}_{\Ord,\Ord}(\in,A)$\\
            ${\qquad}=\quad\GBC+\text{ $\Ord$-iterated truth predicates }$\\
            ${\qquad}=\quad\GBC\,+$ Boolean set-completions exist\quad $=$ \\
            $\GBC+\text{Determinacy for clopen class games of rank }\Ord+1$}}
           ++(90:4) node[theory] (ETROrd*omega) {$\GBC+\ETR_{\Ord\cdot\omega}$}
           ++(90:2) node[theory] (ETR) {$\GBC+\ETR\quad=\quad\GBC+\text{ Determinacy for clopen class games}$}
           ++(90:2) node[theory] (Open) {$\GBC+\text{ Determinacy for open class games}$}
           ++(90:2) node[theory] (Pi11) {$\GBC+\Pi^1_1$-comprehension}
           ++(90:2) node[theory] (KM) {$\KM$}
           ++(90:4) node[theory] (KMkappa-choice) {$\KM+\CC_\kappa$}
           ++(90:3) node[theory] (KM<Ord-choice) {$\KM+\CC_{{<}\Ord}$}
           ++(90:2) node[theory] (KM+CC) {\ $\KM+\CC\quad=\quad\GB+\CC$ \ }
           ++(90:2) node[theory] (KM+DC) {$\KM+\CC+\text{class-DC}$}
           (-17,18) node[theory] (GBCC) {$\GBC+\CC(\Pi^0_2)$}
           (-14,12) node[theory] (GBC+Fodor) {$\GBC+\text{class Fodor principle}$}
           (-14,5) node[theory] (GBC+BoundedFodor) {$\GBC+\text{bounded class Fodor}$}
           (-10,7) node[theory] (GBC+<Ord-Fodor) {$\GBC+\less\Ord$-Fodor}
           (-7,32) node[theory] (KM+Fodor) {$\KM+\text{class Fodor principle}$}
           (-8,27) node[theory] (KM+BoundedFodor) {$\KM+\text{bounded class Fodor}$}
           (-5,29) node[theory] (KM+<Ord-Fodor) {$\KM+\less\Ord$-Fodor};
     \draw[{[scale=.6]<}-,>=Stealth,thick]
                        (GBC) edge (ConZFC)
                        (ConZFC) edge (ConalphaZFC)
                        (ConalphaZFC) edge (ETRomega)
                        (ETRomega) edge (ETRalpha)
                        (ETRalpha) edge (ETR<Ord)
                        (ETR<Ord) edge (ETRord)
                        (ETRord) edge (ETROrd*omega)
                        (ETROrd*omega) edge (ETR)
                        (ETR) edge (Open)
                        (Open) edge (Pi11)
                        (Pi11) edge (KM)
                        (KM) edge (KMkappa-choice)
                        (KMkappa-choice) edge (KM<Ord-choice)
                        (KM<Ord-choice) edge (KM+CC)
                        (KM+CC) edge (KM+DC)
                        (KM+Fodor) edge[in=180,out=75] (KM+CC)
                        (GBC+Fodor) edge[in=-145,out=90,looseness=1.4] (KM+Fodor)
                        (GBCC) edge[in=180,out=90,looseness=1.4] (KM+CC)
                        (GBC+Fodor) edge[in=-90,out=90] (GBCC)
                        (GBC) edge[out=165,in=-70] (GBC+<Ord-Fodor)
                        (GBC) edge[out=165,in=-70] (GBC+BoundedFodor)
                        (GBC+<Ord-Fodor) edge[out=110,in=-90] (GBC+Fodor)
                        (GBC+BoundedFodor) edge[out=110,in=-90] (GBC+Fodor)
                        (KM) edge[out=165,in=-60] (KM+<Ord-Fodor)
                        (KM) edge[out=165,in=-45] (KM+BoundedFodor)
                        (KM+<Ord-Fodor) edge[out=90,in=-60] (KM+Fodor)
                        (KM+<Ord-Fodor) edge[out=90,in=-90] (KM<Ord-choice)
                        (KM+BoundedFodor) edge[out=110,in=-100] (KM+Fodor);
  \end{tikzpicture}
  \caption{The hierarchy of second-order set theories}\label{Figure.Hierarchy-of-second-order-theories}
\end{figure}

Next, let's introduce the class choice principles. An enumerated collection of classes $\<X_a\mid a\in A>$ indexed by a class $A$ in a model of second-order set theory is \emph{coded} in the model, if there is a class $X\subseteq A\times V$, whose slices $X_a=\set{x\mid (a,x)\in X}$ are exactly the corresponding classes on the sequence.

The \emph{class choice} principle \CC\ is the assertion that for any class $A$ and class parameter $Z$, if for every $a\in A$ there is a class $X$ with $\varphi(a,X,Z)$, where $\varphi$ is any assertion in the second-order language of set theory, then there is a coded sequence of classes choosing such witnesses; that is, there is a class $X\of A\times V$, such that $\varphi(a,X_a,Z)$ for every $a\in A$. Stratifying this axiom scheme, let $\CC(\Pi^0_n)$ be the principle resulting when we allow $\varphi$ only of complexity $\Pi^0_n$, and similarly for other levels of complexity. The axiom $\CC_\kappa$ is the case $A=\kappa$.

The first and second authors showed in \cite{GitmanHamkins:Kelley-MorseSetTheoryAndChoicePrinciplesForClasses} that Kelley-Morse set theory does not prove class choice, even for $\Pi^0_1$ assertions, in fact, it does not even prove the principle $\CC_\omega(\Pi^0_1)$. Meanwhile, the theories $\KM$ and $\KM+\CC$ are equiconsistent, as shown by Marek \cite{Marek:KM}, because the `constructible universe' of a $\KM$-model (where one not only restricts the sets to those in the constructible hierarchy up to $\Ord$, but also restricts the classes to those with codes appearing in the constructible hierarchy at some coded meta-ordinal stage above $\Ord$ in the corresponding unrolled structure obtained from the model) satisfies $\KM+\CC$. This equivalence holds level-by-level, so that $\GBC+\Pi^1_n$-comprehension is equiconsistent with $\GBC+\Pi^1_n$-comprehension together with $\CC(\Pi^1_n)$. Similarly, Kameryn Williams \cite{williams:dissertation} proved that the consistency of $\GBC+\ETR$ implies the consistency of $\GBC$ together with class choice for first-order assertions, thereby providing an upper bound on the consistency strength of $\GBC$ with the class Fodor principle; his dissertation \cite{williams:dissertation} is an excellent resource for all these matters. Note that because $\GBC+\CC(\Pi^0_2)$ proves the class Fodor principle, while \KM\ does not, our main theorem provides natural instances of non-linearity in the hierarchy of second-order set theories.

Figure \ref{Figure.Hierarchy-of-second-order-theories} shows the implication diagram for the hierarchy of second-order theories (except that there should be additional arrows, which we omitted for clarity, from $\KM+\text{bounded class Fodor}$ and $\KM+\less\Ord$-Fodor to the corresponding respective $\GBC$ theories at the bottom left).  Many of the arrows in the diagram, but not all, are also strict consistency strength implications. For example, the \ETR\ principles rise in consistency strength. Yet, we've mentioned that $\KM+\CC$ is equiconsistent with \KM; it follows that all the various class Fodor principles are equiconsistent over \KM. We will show in Corollary~\ref{cor:GBCEquiconsistent} that it is also the case that the class Fodor principles are equiconsistent over $\GBC$. So, in particular, the class Fodor principle does not have any strength. In terms of consistency strength, the entire diagram does not vary much, since even $\KM+\CC+\text{class-}\DC$\footnote{Class-$\DC$ is a version of dependent choice for classes asserting that any definable relation on classes without terminal nodes has a branch of length $\omega$.} at the top is strictly below $\ZFC+$ one inaccessible cardinal. The diagram contains numerous second-order principles of interest, whose meaning we will not explain in this article, but which we felt should be added for completeness. The interested reader can find a discussion of these various principles in \cite{ GitmanHamkinsHolySchlichtWilliams:The-exact-strength-of-the-class-forcing-theorem} and \cite{AntosFriedmanGitman:booleanclassforcing}.

We say that a class $A\of\Ord$ is in the \emph{class club filter}, if $A$ contains a class club. The filter itself is not a class, nor even coded as a class; it is nevertheless sensible to refer to the filter in our models by viewing it as a second-order definable meta-class, a predicate on the classes. The class club filter is ${\leq}\alpha$-\emph{closed} for an ordinal $\alpha$, if whenever $\< A_\xi\mid\xi<\alpha>$ is a coded collection of classes $A_\xi\of\Ord$, each in the class club filter, then the intersection $\Intersect_\xi A_\xi$ is also in the class club filter. This is expressible in the second-order language of set theory. Similarly, the class club filter is \emph{normal}, if whenever $\<A_\xi\mid\xi\in\ORD>$ is a coded collection of classes in the class club filter, then its diagonal intersection is also in the class club filter.\goodbreak

\section{Positive instances of the class Fodor principle}

Let us begin with the observation that $\GB^-$, without any choice principle at all, proves the very weak class Fodor principle. A version of this theorem was observed by Walter Neumer in 1951 \cite{Neumer1951:Verallgemeinerung-eines-Satzes-von-Alexandroff-und-Urysohn}.

\begin{theorem}[The very weak class Fodor principle]\label{th:weakFodor}
  Assume $\GB^-$. Then every regressive class function $F:S\to\Ord$ defined on a stationary class $S$ is constant on an unbounded subclass $A\of S$.
\end{theorem}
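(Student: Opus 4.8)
The plan is to argue by contradiction, turning the failure of the conclusion into a single canonically-defined class function whose closure points contradict the stationarity of $S$; crucially, this avoids any appeal to class choice, which is exactly what makes the statement provable in the choiceless theory $\GB^-$.

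First I would consider the fibers $F^\inverse\{\beta\}=\set{\alpha\in S\mid F(\alpha)=\beta}$ for each ordinal $\beta$; each is a class by class comprehension. Saying that $F$ is \emph{not} constant on any unbounded subclass of $S$ is precisely the statement that every fiber $F^\inverse\{\beta\}$ is bounded in $\Ord$. Assuming this for contradiction, each such fiber is a bounded subclass of the ordinals and hence a set (any subclass of an ordinal is a set in $\GB^-$), so it has an ordinal supremum. I can therefore define a genuine class function $g:\Ord\to\Ord$ by letting $g(\beta)$ be an ordinal strictly above every element of $F^\inverse\{\beta\}$, for instance $g(\beta)=\sup(F^\inverse\{\beta\})+1$. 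The key point is that $g$ is defined by an explicit formula from the parameters $F$ and $S$, with no choices made along the way.

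Next I would form the class $C$ of limit ordinals $\delta$ closed under $g$, that is, those $\delta$ such that $g(\beta)<\delta$ for all $\beta<\delta$, and check that $C$ is a class club. Closure under limits is immediate; for unboundedness, given any $\gamma$ I close off by iterating $g$ along an $\omega$-sequence, using that the image of a set under a class function is a set (by $\ZF^-$ collection and replacement) so that each successive supremum is again an ordinal, and then taking the limit. Since $S$ is stationary, it meets the class club $C$: fix $\delta\in S\cap C$ with $\delta>0$. Regressivity gives $\beta=F(\delta)<\delta$, so $\delta\in F^\inverse\{\beta\}$ and hence $\delta<g(\beta)$; but $\delta\in C$ is closed under $g$ and $\beta<\delta$, so $g(\beta)<\delta$, a contradiction.

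The main obstacle to watch is not any deep combinatorics but the bookkeeping that keeps everything inside $\GB^-$: one must verify that each object built is a legitimate set or class in the absence of the power set axiom---in particular that bounded subclasses of $\Ord$ are sets and that the closing-off construction for $C$ only ever takes suprema of set-sized families---and, most importantly, that the entire argument defines the single witnessing function $g$ outright rather than selecting a club (or a bound) separately for each fiber, since the latter would smuggle in precisely the class choice principle that $\GB^-$ lacks.
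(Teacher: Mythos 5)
Your proof is correct and is essentially the paper's own argument: both define, without any choice, the bounding function on fibers (your $g(\beta)=\sup(F^{-1}\{\beta\})+1$ is the paper's $\gamma\mapsto\beta_\gamma$), take the class club $C$ of its closure points, and derive a contradiction with regressivity at a point of $S\intersect C$. Your extra care about $\GB^-$ bookkeeping (bounded subclasses of $\Ord$ being sets, iterating $g$ to get unboundedness of $C$) is exactly the verification the paper leaves implicit.
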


\begin{proof}
Assume that $S$ is a stationary class of ordinals and $F(\gamma)<\gamma$ for all $\gamma\in S$. If $F$ is not constant on any unbounded subclass of $S$, then for every ordinal $\gamma$, there is the (least) ordinal bound $\beta_\gamma$ on the pre-image of $\gamma$ under $F$. That is, $F(\alpha)\neq\gamma$ for all $\alpha\geq\beta_\gamma$. The function $\gamma\mapsto\beta_\gamma$ is first-order definable from $F$ and therefore forms a class in $\GB^-$. Let $C$ be the class of closure points of this function, so $\theta\in C$ if and only if $\beta_\gamma<\theta$ for every $\gamma<\theta$. This is a closed unbounded class of ordinals. Since $S$ is stationary, there is some $\theta\in C\intersect S$. But notice that $F(\theta)\neq\gamma$ for every $\gamma<\theta$, since $\beta_\gamma<\theta$ for all such $\gamma$. This contradicts our assumption that $F$ is regressive on $S$.
\end{proof}

It follows that for any regressive function $F:S\to\Ord$ defined on a stationary class $S$ and any coded sequence of closed unbounded classes $\<C_\alpha\mid\alpha\in\Ord>$, the function $F$ has constant value $\gamma$ on a subclass $T\of S$ such that $T\intersect C_\alpha$ is unbounded for every $\alpha$. This follows from the above theorem because the diagonal intersection $C=\triangle_\alpha C_\alpha$ is club and $F$ is regressive on $S\intersect C$, which is itself a stationary class, and so there is an unbounded class $T\of S\intersect C$ on which $F$ is constant.

Note that one cannot omit or even weaken the assumption that $S$ is stationary in theorem \ref{th:weakFodor}, because if $S$ is not stationary, then there is a closed unbounded class of ordinals $C\of\Ord$ disjoint from $S$, and in this case the function $F(\gamma)=\sup(C\intersect\gamma)$ is regressive on $S$, but not constant on any unbounded class.

The proof of theorem~\ref{th:weakFodor} can similarly be used to show in $\ZF^-$ that every regressive function on a regular cardinal is constant on an unbounded subset.

We observe next that the classical equivalence between Fodor's lemma and the normality of the class club filter translates to the class version.

\begin{theorem}\label{th:FodorEquivNormalFilter}
 The following are equivalent over $\GB^-$.
 \begin{enumerate}
   \item The class Fodor principle.
   \item The class club filter is normal.
 \end{enumerate}
\end{theorem}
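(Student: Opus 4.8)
The plan is to mirror the classical equivalence between Fodor's lemma and normality of the club filter, proving each implication by contradiction. The only delicate point over $\GB^-$ is that a naive adaptation would, at one spot, select a class club for each of $\Ord$-many non-stationary classes, an appeal to class choice unavailable here; the crux is to make that selection canonically by passing to definable complements.

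First I would prove (2) $\Rightarrow$ (1). Assume the class club filter is normal and let $F\colon S\to\Ord$ be regressive on a stationary class $S$; suppose toward a contradiction that $F$ is constant on no stationary subclass, so that each fiber $F^{-1}(\alpha)=\set{\beta\in S\mid F(\beta)=\alpha}$ is non-stationary. Rather than choosing, for each $\alpha$, a class club disjoint from $F^{-1}(\alpha)$, I would use the canonical class $A_\alpha=\Ord\minus F^{-1}(\alpha)$: since $F^{-1}(\alpha)$ is non-stationary, some class club lies inside $A_\alpha$, so $A_\alpha$ is in the class club filter, and the whole sequence $\<A_\alpha\mid\alpha\in\Ord>$ is coded by the class $\set{(\alpha,\beta)\mid\beta\notin F^{-1}(\alpha)}$, which exists by class comprehension in $\GB^-$ as it is first-order definable from $F$. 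Normality then gives a class club inside the diagonal intersection $\triangle_\alpha A_\alpha$, which must meet the stationary class $S$ at some $\beta$; for that $\beta$ we have $F(\beta)\neq\alpha$ for every $\alpha<\beta$, so $F(\beta)\geq\beta$, contradicting regressivity.

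Next I would prove (1) $\Rightarrow$ (2). Assume the class Fodor principle and let $\<A_\alpha\mid\alpha\in\Ord>$ be a coded sequence of classes each in the class club filter; I want the diagonal intersection $D=\triangle_\alpha A_\alpha$ to contain a class club, equivalently $S=\Ord\minus D$ to be non-stationary. If $S$ were stationary, then for each $\beta\in S$ there is $\alpha<\beta$ with $\beta\notin A_\alpha$, and I would set $F(\beta)$ to be the least such $\alpha$; this is a regressive class function, definable from the code of the sequence. By the class Fodor principle, $F$ is constant with some value $\alpha_0$ on a stationary subclass $T$, whence $T\intersect A_{\alpha_0}=\emptyset$; but $A_{\alpha_0}$ contains a class club, which must meet the stationary class $T$---a contradiction. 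Here only the single club for the index $\alpha_0$ is needed, so no choice principle intervenes.

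The step I expect to be the main obstacle is the canonical-choice move in the first direction. The classical proof produces one club per fiber and forms their diagonal intersection, but assembling those clubs into a coded $\Ord$-sequence is exactly what $\GB^-$ cannot do without $\CC(\Pi^0_1)$. The resolution I would stress is that the clubs themselves are never needed: it suffices that the definable classes $A_\alpha=\Ord\minus F^{-1}(\alpha)$ belong to the filter, which holds fiberwise and furnishes a coded sequence automatically, leaving only diagonal intersection and membership in the filter---both expressible over $\GB^-$---as the second-order operations in play.
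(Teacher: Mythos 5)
Your proof is correct and follows essentially the same route as the paper's: both directions use the definable complements of the fibers (respectively, the least failing index) to build the needed coded sequence and regressive function, thereby sidestepping any appeal to class choice exactly as the paper does. The point you highlight---that only the canonical classes $A_\alpha$, never a chosen sequence of clubs, are needed---is precisely the remark the paper makes in its own argument.
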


\begin{proof}
($1\to 2$) Assume that the class Fodor principle holds, and suppose that we have a coded $\Ord$-sequence $\<A_\alpha\mid\alpha<\Ord>$ of classes $A_\alpha$, each of which contains a class club. Note that we cannot necessarily choose these clubs, since $\GB^-$ and, indeed, even \KM\ does not prove the class-choice principle \CC. But we can appeal to the class Fodor principle. If the diagonal intersection $A=\triangle_\alpha A_\alpha$ does not contain a class club, then the complement $S=\Ord-A$ is stationary. For $\beta\in S$, let $F(\beta)$ be the least $\alpha<\beta$ for which $\beta\notin A_\alpha$. So $F:S\to\Ord$ is a regressive class function on the stationary class $S$. If $F\restrict T$ has constant value $\alpha$ for some $T\of S$, then $T$ is disjoint from $A_\alpha$, and hence disjoint from a class club, and therefore not stationary. So $F$ is not constant on any stationary class. Therefore, the diagonal intersection $\triangle_\alpha A_\alpha$ must contain a class club after all, and so the class club filter is closed under diagonal intersection.

($2\to 1$) Conversely, suppose that the class club filter is closed under diagonal intersections and that $F:S\to\Ord$ is a regressive class function defined on a stationary class $S\of\Ord$. Let $B_\alpha=F^{-1}(\{\alpha\})=\set{\beta\mid F(\beta)=\alpha}$. If some $B_\alpha$ is stationary, then $F$ is constant on a stationary class, and we're done. Otherwise, the complements $A_\alpha=\Ord-B_\alpha$ each contain a class club. In this case, it follows by our assumption that the diagonal intersection $A=\triangle_\alpha A_\alpha$ also contains a class club. But note that $\beta\in A\intersect S$ implies $F(\beta)\neq\alpha$ for any $\alpha<\beta$, contrary to the fact that $F$ was regressive on $S$.
\end{proof}


\begin{theorem}
 The following are equivalent in $\GB^-$ for any ordinal $\kappa$.
 \begin{enumerate}
   \item The class $\kappa$-Fodor principle.
   \item The class club filter is ${\leq}\kappa$-closed.
 \end{enumerate}
\end{theorem}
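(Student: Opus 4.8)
The plan is to prove this as a direct generalization of the preceding theorem~\ref{th:FodorEquivNormalFilter}, since the case $\kappa=\Ord$ of the present statement is essentially that theorem (with ``$\leq\kappa$-closed'' becoming normality under diagonal intersection). The key structural difference is that here we index our families of clubs by ordinals $\alpha<\kappa$ rather than by $\Ord$, so throughout we work with ordinary $\kappa$-indexed intersections $\Intersect_{\alpha<\kappa}$ in place of diagonal intersections. I would follow the same two-directional pattern, carefully tracking where the bound $\kappa$ on the range of $F$ interacts with the bound $\kappa$ on the length of the coded sequence of clubs.

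\emph{Proof.} ($1\to 2$) Assume the class $\kappa$-Fodor principle, and let $\<A_\alpha\mid\alpha<\kappa>$ be a coded sequence of classes, each containing a class club. As in theorem~\ref{th:FodorEquivNormalFilter}, we cannot choose the clubs, since $\GB^-$ does not prove \CC, so we argue via Fodor. Suppose toward contradiction that $A=\Intersect_{\alpha<\kappa}A_\alpha$ does not contain a class club, so $S=\Ord-A$ is stationary. For $\beta\in S$, let $F(\beta)$ be the least $\alpha<\kappa$ with $\beta\notin A_\alpha$; this is well-defined since $\beta\notin A$ means $\beta$ fails to lie in some $A_\alpha$. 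Thus $F:S\to\kappa$ is a class function defined on the stationary class $S$, and it is first-order definable from the code for $\<A_\alpha>$, hence a class in $\GB^-$. By the class $\kappa$-Fodor principle, $F$ is constant with some value $\alpha<\kappa$ on a stationary class $T\of S$. But then $T$ is disjoint from $A_\alpha$, hence disjoint from a class club, hence nonstationary, a contradiction. Therefore $\Intersect_{\alpha<\kappa}A_\alpha$ contains a class club, and the class club filter is $\leq\kappa$-closed.

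($2\to 1$) Conversely, assume the class club filter is $\leq\kappa$-closed, and let $F:S\to\kappa$ be a class function on a stationary class $S$. Set $B_\alpha=F^{-1}(\{\alpha\})$ for $\alpha<\kappa$. If some $B_\alpha$ is stationary we are done, so assume not; then each complement $A_\alpha=\Ord-B_\alpha$ contains a class club. By $\leq\kappa$-closure, $A=\Intersect_{\alpha<\kappa}A_\alpha$ contains a class club. Since $S$ is stationary, choose $\beta\in A\intersect S$. Then $\beta\notin B_\alpha$ for every $\alpha<\kappa$, so $F(\beta)\neq\alpha$ for all $\alpha<\kappa$, contradicting that $F(\beta)\in\kappa$. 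Hence some $B_\alpha$ is stationary and $F$ is constant on a stationary class.

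The main point requiring care—though it is not a genuine obstacle—is verifying that the regressivity hypothesis of the general Fodor principle plays no role once the range is capped at $\kappa$: the function $F$ in the forward direction need not be regressive, but it maps into $\kappa$, which is exactly the hypothesis of the class $\kappa$-Fodor principle, so the argument goes through unchanged. I expect no serious difficulty, since the proof is a faithful transcription of theorem~\ref{th:FodorEquivNormalFilter} with diagonal intersections replaced by $\kappa$-indexed intersections.
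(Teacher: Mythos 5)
Your proof is correct and takes exactly the approach the paper intends: the paper's own proof consists solely of the remark that one repeats the proof of theorem~\ref{th:FodorEquivNormalFilter} with diagonal intersections of length $\Ord$ replaced everywhere by intersections of length $\kappa$, which is precisely what you have written out. Your closing observation—that regressivity is supplanted by the hypothesis that $F$ maps into $\kappa$, which is exactly what the class $\kappa$-Fodor principle requires—correctly identifies the only point where the translation needs care.
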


The proof is analogous to the proof of theorem~\ref{th:FodorEquivNormalFilter}, if we everywhere replace diagonal intersections of length $\Ord$ by intersections of length $\kappa$.

Finally, as we mentioned in the introduction, we can implement one of the proofs of the classical Fodor's lemma to prove the class Fodor principle, if we assume a sufficient class-choice principle.

\begin{theorem}\label{Theorem.CC-implies-Fodor}
The theory $\GB^-$ augmented with the class-choice principle $\CC(\Pi^0_2)$ proves the class Fodor principle. More generally, $\GB^-+\CC_\kappa(\Pi^0_2)$ proves the class $\kappa$-Fodor principle.
\end{theorem}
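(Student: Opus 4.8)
The plan is to adapt one of the standard proofs of the classical Fodor lemma, using the class-choice principle $\CC(\Pi^0_1)$ precisely at the point where the classical argument selects, for each value $\alpha$, a club witnessing that the fiber over $\alpha$ is non-stationary. Concretely, suppose $F:S\to\kappa$ is a class function on a stationary class $S\of\Ord$, and suppose toward a contradiction that $F$ is not constant on any stationary subclass of $S$. For each $\alpha<\kappa$, the fiber $B_\alpha=\set{\beta\in S\mid F(\beta)=\alpha}$ is then non-stationary, so there is a class club $C$ disjoint from $B_\alpha$. The key observation is that the property ``$C$ is a class club disjoint from $B_\alpha$'' is $\Pi^0_1$-expressible with $F$ (and $S$) as class parameters: being closed and unbounded is a $\Pi^0_1$ assertion, and disjointness from the $F$-definable fiber is also $\Pi^0_1$.

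First I would verify this complexity bookkeeping carefully, since it is the linchpin of the whole argument: the statement $\forall\gamma\,(\gamma\in C\to F(\gamma)\neq\alpha)$ together with the closure and unboundedness clauses must all fit inside $\Pi^0_1$. With $S$ and $F$ as parameters, ``$\gamma\in B_\alpha$'' is first-order, so disjointness is universally quantified over ordinals and hence $\Pi^0_1$; closure is likewise a universal assertion about limits, and unboundedness $\forall\delta\,\exists\gamma>\delta\,(\gamma\in C)$ is $\Pi^0_1$ as well. Thus for each $\alpha<\kappa$ there is a class $C$ satisfying a $\Pi^0_1$ property $\varphi(\alpha,C,F,S)$. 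Applying $\CC_\kappa(\Pi^0_1)$ (with index class $\kappa$), I would then extract a single coded sequence $\<C_\alpha\mid\alpha<\kappa>$, given by a class $C\of\kappa\times V$ whose slices $C_\alpha$ are class clubs with $C_\alpha\intersect B_\alpha=\emptyset$.

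Next I would take the intersection $D=\Intersect_{\alpha<\kappa}C_\alpha$, which is a single class definable from the coded sequence $C$. Here I must confirm that an intersection of $\kappa$-many class clubs, coded by a single class, is itself a class club; this is the place where the length of the intersection being bounded by a cardinal $\kappa$ (rather than $\Ord$) matters, and it is provable in $\GB^-$ because $\kappa$-indexed intersections of clubs remain club exactly as in the classical setting. Then $D\intersect S$ is stationary, since $S$ is stationary and $D$ is club, so I may pick any $\beta\in D\intersect S$. But $\beta\in S$ means $F(\beta)=\alpha$ for some $\alpha<\kappa$, whence $\beta\in B_\alpha$, while $\beta\in D\of C_\alpha$ gives $\beta\notin B_\alpha$ --- the desired contradiction. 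This proves the class $\kappa$-Fodor principle, and the special case $\kappa=\Ord$ (using $\CC(\Pi^0_1)$ with index class $\Ord$ and diagonal intersection in place of ordinary intersection) yields the full class Fodor principle.

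The main obstacle I anticipate is the complexity calculation in the second step: one must be scrupulous that nothing pushes the witnessing property above $\Pi^0_1$, because $\CC$ at higher complexity levels is not available here and indeed the whole thematic point of the paper is that even $\CC(\Pi^0_1)$ fails in $\KM$. A secondary subtlety, for the full $\Ord$-length version, is that ordinary intersection of an $\Ord$-sequence of clubs need not be club, so there I would substitute the diagonal intersection $\triangle_\alpha C_\alpha$ and invoke the equivalence with normality of the class club filter established in theorem~\ref{th:FodorEquivNormalFilter}, reducing the full principle to the statement that the diagonal intersection of class clubs coded by the witnessing class again contains a class club.
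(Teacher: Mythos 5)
Your proposal is correct and follows essentially the same route as the paper's own proof: assume toward contradiction that no fiber is stationary, use $\CC(\Pi^0_1)$ (resp.\ $\CC_\kappa(\Pi^0_1)$) to select a coded sequence of class clubs avoiding the fibers---the paper makes exactly the same $\Pi^0_1$-complexity claim for the club-avoidance property that you verify---and then intersect ($\kappa$-fold intersection in the bounded case, diagonal intersection in the $\Ord$-length case) to produce a club meeting $S$, contradicting regressivity. The only cosmetic difference is your detour through theorem~\ref{th:FodorEquivNormalFilter} for the $\Ord$-length case, which is unnecessary: once the clubs $C_\alpha$ are chosen, $\GB^-$ proves outright that $\triangle_\alpha C_\alpha$ is a class club, and any $\gamma\in\triangle_\alpha C_\alpha\intersect S$ satisfies $F(\gamma)\neq\alpha$ for all $\alpha<\gamma$, contradicting regressivity directly, just as in your $\kappa$ case.
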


\begin{proof}
Suppose toward contradiction that $F:S\to\Ord$ is regressive on a stationary class $S\of\Ord$, but not constant on any stationary subclass of $S$. So for each ordinal $\alpha$, there is a class club $C$ with $F(\gamma)\neq\alpha$ for all $\gamma\in C$. This is a $\Pi^0_2$ property of the class $C$, with class parameter $F$. By the class-choice principle $\CC(\Pi^0_2)$, therefore, we may choose such clubs, and so there is a coded sequence of classes $\<C_\alpha\mid \alpha\in\Ord>$, where $\gamma\in C_\alpha\implies F(\gamma)\neq\alpha$. Let $C=\triangle_{\alpha\in\Ord}C_\alpha$ be the diagonal intersection of these class clubs, which is itself a class club and therefore meets $S$. So there is some $\gamma\in C\intersect S$. But since $F(\gamma)\neq\alpha$ for all $\alpha<\gamma$, this contradicts our assumption that $F$ was regressive on $S$.

The same argument works for $F:S\to\kappa$, using only $\CC_\kappa(\Pi^0_2)$, since we need only pick $\kappa$ many clubs in this case.
\end{proof}

We will show in Section~\ref{sec:separatingPrinciples} that it is consistent relative to $\KM$ that the class $\less\Ord$-Fodor principle holds, but the class Fodor principle fails.


\begin{lemma}\label{lem:FodorEquivLessOrdFodor}
The class Fodor principle is equivalent over $\GB^-$ to the conjunction of the bounded class Fodor principle and the class $\less\Ord$-Fodor principle.
\end{lemma}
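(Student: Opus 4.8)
The plan is to prove both implications of the equivalence directly from the definitions, since neither direction requires any class-choice principle and the entire content lies in combining the two auxiliary principles.

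For the forward direction, assume the class Fodor principle. The bounded class Fodor principle is then immediate, since a function that is constant on a stationary subclass is in particular bounded on that subclass. For the class $\less\Ord$-Fodor principle, fix a cardinal $\kappa$ and a class function $F:S\to\kappa$ on a stationary class $S\of\Ord$. Such an $F$ need not be regressive on all of $S$, so the idea is to pass to the tail $S'=\set{\gamma\in S\mid \gamma\geq\kappa}$, which remains stationary because the ordinals $\geq\kappa$ form a class club and the intersection of a stationary class with a class club is stationary. On $S'$ we have $F(\gamma)<\kappa\leq\gamma$, so $F\restrict S'$ is regressive, and the class Fodor principle yields a stationary subclass of $S'\of S$ on which $F$ is constant. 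Hence the class $\kappa$-Fodor principle holds for every cardinal $\kappa$, which is exactly the class $\less\Ord$-Fodor principle.

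For the converse, assume both the bounded class Fodor principle and the class $\less\Ord$-Fodor principle, and let $F:S\to\Ord$ be regressive on a stationary class $S\of\Ord$. By the bounded class Fodor principle there are a stationary subclass $T\of S$ and an ordinal $\delta$ with $F(\gamma)<\delta$ for all $\gamma\in T$. Choosing any cardinal $\kappa\geq\delta$ (for instance $\kappa=|\delta|^+$), I may regard $F\restrict T$ as a class function $T\to\kappa$, whereupon the class $\kappa$-Fodor principle, an instance of the class $\less\Ord$-Fodor principle, produces a stationary subclass of $T\of S$ on which $F$ is constant. This establishes the class Fodor principle.

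I do not anticipate a serious obstacle: the lemma merely records how the two weaker principles recombine into the full one. The only points needing a word of care are that intersecting a stationary class with the class club $\set{\gamma\mid\gamma\geq\kappa}$ preserves stationarity, and that every ordinal lies below some cardinal, so that the bounded values of $F$ can be viewed as landing in a genuine cardinal $\kappa$. Both facts hold in $\ZF^-$ with no appeal to choice, which is the essential point given that $\GB^-$ proves neither global choice nor the class-choice principle $\CC$.
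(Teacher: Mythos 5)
Your overall strategy is the same as the paper's---the forward direction by specializing the class Fodor principle, and the converse by first bounding $F$ on a stationary $T\of S$ and then applying a small-target Fodor principle on $T$---and your forward direction is fine; it even usefully spells out the tail argument (restricting to $\set{\gamma\in S\mid\gamma\geq\kappa}$) that the paper dismisses as ``clear.'' But the converse as written has a genuine gap over the stated base theory $\GB^-$. You pass from the bound $\delta$ to ``any cardinal $\kappa\geq\delta$ (for instance $\kappa=|\delta|^+$),'' and you assert in your closing paragraph that the existence of such a cardinal is provable in $\ZF^-$. It is not: the existence of a cardinal above a given ordinal is an instance of Hartogs' theorem, whose proof requires the power set axiom, and $\GB^-$ asserts only $\ZF^-$ for its first-order part. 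Concretely, in a model of $\GB^-$ whose sets form $H_{\omega_1}$ (all sets hereditarily countable), $\omega$ is the largest cardinal, so there is no cardinal at or above $\omega+1$, and $|\delta|^+$ simply does not exist for infinite $\delta>\omega$. So the step fails in exactly the power-set-free setting for which the lemma is stated; the issue is not about choice, as you suggest, but about power set.

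The repair is cheap, and it is what the paper's appeal to ``the $\beta$-Fodor principle'' for an ordinal bound $\beta$ implicitly amounts to: go down to a cardinal rather than up. Since $\delta$ is an ordinal, its cardinality $\kappa=|\delta|$ does exist in $\ZF^-$---it is the least ordinal in bijection with $\delta$, and no power set or choice is needed because $\delta$ is already well-ordered---and $\kappa\leq\delta$ is a cardinal. Fix a bijection $g:\delta\to\kappa$ and apply the class $\kappa$-Fodor principle to $g\compose(F\restrict T)$, which is a class function $T\to\kappa$; it is constant on some stationary $T'\of T$, and hence $F$ is constant on $T'$ with value the $g$-preimage of that constant. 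With this one change your argument is correct and coincides with the paper's proof.
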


\begin{proof}
The class Fodor principle clearly implies both the bounded class Fodor principle and the class $\less\Ord$-Fodor principle. Conversely, if $F:S\to \Ord$ is regressive on a stationary class, then by the bounded class Fodor principle, we can first restrict to a stationary class $T\of S$ on which $F$ is bounded by some ordinal $\beta$, and then by the $\beta$-Fodor principle, we can further restrict to a stationary subclass on which $F$ is constant, as desired.
\end{proof}

Consequently, once the bounded class Fodor principle holds, then the full class Fodor principle is equivalent to the class $\less\Ord$-Fodor principle. It follows from the theorems in section \ref{sec:separatingPrinciples} that neither the bounded class Fodor principle nor the class $\less\Ord$-Fodor principle implies the other.

\section{Class forcing in second-order set theories}\label{section.second-order-forcing}

Our main argument will involve class forcing over models of second-order set theory, and so let us briefly review some of the basics.

A class forcing notion $\P$ is simply a class partial pre-order, with maximal element $\one$, and a \emph{$\P$-name} is a set or class consisting of pairs $(\tau,p)$, where $\tau$ is a set $\P$-name and $p\in\P$. The check names $\check x=\set{\<\check y,\one>\mid y\in x}$, for example, are defined recursively, and similarly for classes $\check A=\set{\<\check a,\one>\mid a\in A}$.

A class forcing notion $\P$ admits a \emph{forcing relation}, if there is a relation $\forces_\P$ satisfying the \emph{forcing relation recursion}, as described in \cite{GitmanHamkinsHolySchlichtWilliams:The-exact-strength-of-the-class-forcing-theorem}. This is a class recursion on $\P$-names, by which the forcing relation $p\forces_\P\sigma=\tau$ and $p\forces_\P\sigma\in\tau$ on atomic formulas is seen to obey the expected recursive properties. This recursion is expressible internally in the language of second-order set theory, making no reference to countable models, generic filters or the truth and definability lemmas. One easily extends the forcing relation from atomic formulas, when it exists, to forcing relations on other formulas---the atomic forcing relation is the key difficult case.

The theory \GBC, it turns out, is insufficient to prove that every class forcing notion admits a forcing relation, although the stronger theory $\GBC+\ETR$ can prove this, and indeed, the main theorem of \cite{GitmanHamkinsHolySchlichtWilliams:The-exact-strength-of-the-class-forcing-theorem} is that the existence of forcing relations for all class forcing is equivalent over \GBC\ to the principle $\ETRord$.

Meanwhile, \GBC\ is able to prove that forcing relations exist for certain particularly well-behaved forcing notions, including all the forcing we shall employ in this article. For instance, in $\GB^-$ every pre-tame forcing notion, and consequently also every tame forcing notion (see \cite{friedman:classforcing} for definitions), admits a definable forcing relation. Such forcing preserves both $\KM$ and $\KM+\CC$ to the forcing extension.

\begin{theorem}[Stanley, see \cite{PeterHolyRegulaKrapfPhilippSchlicht:classforcing2}]\label{theorem.Tame-forcing-has-definable-forcing-relation}
Assume $\GB^-$. Every pre-tame class forcing notion $\P$ admits a first-order $\P$-definable forcing relation for atomic formulas (and hence forcing relations for any particular formula).
\end{theorem}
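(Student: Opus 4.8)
The plan is to prove Stanley's theorem, that over $\GB^-$ every pre-tame class forcing notion $\P$ admits a first-order $\P$-definable atomic forcing relation. The strategy is to build the forcing relation by an explicit transfinite recursion on the complexity of $\P$-names, exploiting the two pillars of pre-tameness: the poset $\P$ is \emph{tame} in that the defining data is a class, and crucially the recursion steps stay confined to a \emph{set} amount of information at each stage. First I would recall the intended recursive clauses for the atomic relation. For equality, $p\forces_\P \sigma=\tau$ should hold exactly when for every pair $\<\pi,s>\in\sigma$ and every $q\leq p$ with $q\leq s$, the condition $q$ forces $\pi\in\tau$, and symmetrically with $\sigma,\tau$ interchanged; for membership, $p\forces_\P \sigma\in\tau$ should hold exactly when the conditions $q\leq p$ forcing $\sigma=\pi$ for some $\<\pi,s>\in\tau$ with $q\leq s$ are dense below $p$. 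These are the standard clauses, and the content of the theorem is that in $\GB^-$ one can actually exhibit a single class relation satisfying them simultaneously.

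Next I would set up the recursion so that it can be carried out inside $\GB^-$. The key point is that although $\P$ and its names are proper classes, the satisfaction of the atomic clauses at a name-pair $(\sigma,\tau)$ refers only to names of strictly smaller rank, so the relation is determined by a class recursion along the well-founded relation on $\P$-names given by the name-rank. I would verify that this recursion is \emph{pre-tame}, meaning that densities required to be checked below any condition $p$ can be met within a set: pre-tameness of $\P$ guarantees precisely that for any set of dense antichains one can refine below $p$ using only a set of conditions, so each recursive clause quantifies, after an application of pre-tameness, only over a set rather than over the whole class $\P$. This is what keeps the recursion first-order definable: the relation $p\forces_\P\sigma=\tau$ can be written as a first-order formula with the recursion variable replaced by the already-constructed relation on smaller names, and collection in $\ZF^-$ (not merely replacement) ensures the needed witnesses form a set.

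The main obstacle, and the technical heart of the proof, is showing that this class recursion actually has a solution definable in $\GB^-$ despite the recursion running along a class-length well-order. In general $\GBC$ does not prove that arbitrary class recursions along $\Ord$ (or longer) have solutions---that is exactly the content of $\ETRord$ and the class forcing theorem. What rescues us here is pre-tameness: I would show that pre-tameness lets one localize the recursion, so that the value of the forcing relation on any name-pair depends only on a set-sized fragment of the recursion, namely the restriction of the hierarchy of names below a fixed rank. Thus one does not need the full strength of class recursion; instead, for each fixed name-rank $\alpha$ one defines, by ordinary $\ZF^-$ recursion on $\alpha$ inside the first-order part, the approximating relation on names of rank below $\alpha$, and the desired atomic forcing relation is the union of these set-sized approximations---a first-order definable class by the comprehension axiom of $\GB^-$. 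Finally I would check that the resulting class relation genuinely satisfies the forcing-relation recursion of \cite{GitmanHamkinsHolySchlichtWilliams:The-exact-strength-of-the-class-forcing-theorem}, and note that extending from atomic formulas to arbitrary formulas is routine by induction on formula complexity, with the quantifier case again handled by pre-tameness.
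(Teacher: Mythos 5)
Note first that the paper itself contains no proof of this statement: it is quoted as Stanley's theorem with a citation to \cite{PeterHolyRegulaKrapfPhilippSchlicht:classforcing2}, so your proposal can only be measured against the argument known from the literature. Your guiding slogan---that pretameness lets one localize the forcing-relation recursion to set-sized information---is indeed the correct intuition behind Stanley's theorem, but the construction you describe does not implement it, and the gap sits exactly where the real difficulty lies. You claim that ``for each fixed name-rank $\alpha$ one defines, by ordinary $\ZF^-$ recursion on $\alpha$ inside the first-order part, the approximating relation on names of rank below $\alpha$,'' these approximations being set-sized. They are not: since $\P$ is a proper class, the $\P$-names of rank below $\alpha$ already form a proper class (even the rank-one names $\set{\<\check\emptyset,p>}$ for $p\in\P$ do), and the approximation $\set{(p,\sigma,\tau)\mid p\forces_\P\sigma=\tau,\ \rank(\sigma),\rank(\tau)<\alpha}$ is a class relation on $\P$. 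So your recursion has proper-class-sized stages, and a recursion with class-sized stages is precisely an instance of $\ETR$, which $\GB^-$ (indeed $\GBC$) cannot prove has solutions---this is the content of the main theorem of \cite{GitmanHamkinsHolySchlichtWilliams:The-exact-strength-of-the-class-forcing-theorem}, that the existence of atomic forcing relations for all class forcings is equivalent over $\GBC$ to $\ETRord$. Consequently the recursion cannot be run ``inside the first-order part,'' and the union of its stages cannot be recovered by class comprehension, because the stages themselves are not definable before the recursion is solved.

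Second, your appeal to pretameness is circular as stated. Pretameness refines a given set-indexed family of dense \emph{classes} to predense \emph{sets} below some condition; to invoke it, those dense classes must already be available as definable classes. But the dense classes relevant at a given stage of your recursion---for instance $\set{q\mid q \text{ decides } \pi\in\tau}$ for lower-rank $\pi$---are defined from the very forcing relation you are in the middle of constructing. Breaking this circularity is the actual technical heart of Stanley's proof: rather than the class-sized relation itself, one carries through the induction hereditarily \emph{set-sized} decision structures (a set of conditions predense below $p$ together with set-sized data settling the relevant lower-rank atomic facts), uses pretameness to show such structures densely exist and cohere across ranks, and then defines $p\forces_\P\sigma=\tau$ by a first-order quantification over these set-sized certificates. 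That device---the reason the relation ends up first-order definable despite $\GB^-$ lacking any class-recursion principle---is absent from your outline, so the proposal as written does not establish the theorem.
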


\begin{theorem}[\cite{antos:ClassForcingInClassSetTheory}]
Tame forcing preserves $\KM$ to forcing extensions.
\end{theorem}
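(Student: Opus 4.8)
The plan is to verify in the forcing extension $\mathcal M[G]$ each of the axioms of $\KM$: the first-order $\ZFC$ axioms, the global choice principle, and---most importantly---the full second-order class comprehension scheme. The first-order part is handled by the standard theory of tame class forcing: pre-tameness preserves the separation and collection schemes (with class parameters), and the additional tameness hypothesis is exactly what is needed to preserve the power set axiom, so that $\mathcal M[G]$ models $\ZFC$ in its first-order part. Global choice is preserved because the ground-model global well-order, together with the canonical well-ordering of the $\P$-names that it induces, yields a first-order definable well-ordering of the sets of $\mathcal M[G]$ in order type $\Ord$; here one uses the standard fact, available for pre-tame forcing, that every set of the extension is the valuation of a set name.

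The heart of the argument, and the place where the full strength of $\KM$ rather than merely $\GBC$ is used, is the definability of the forcing relation $\forces_\P$ for \emph{every} formula of the two-sorted language, including those with quantifiers over classes. By Stanley's theorem (theorem~\ref{theorem.Tame-forcing-has-definable-forcing-relation}) the atomic forcing relation is first-order $\P$-definable, and from it one obtains forcing relations for all first-order formulas with set and class parameters by the usual recursion on the propositional connectives and first-order quantifiers. I would extend this recursion---now as an external induction on formula complexity---through the second-order quantifiers. For a formula $\exists X\,\psi(X)$, I would declare $p\forces_\P\exists X\,\psi(X)$ to hold exactly when the conditions $q\leq p$ admitting some class name $\dot X$ with $q\forces_\P\psi(\dot X)$ are dense below $p$. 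The decisive point is that this clause quantifies over class names, that is, over classes of $\mathcal M$, and so it is a genuinely second-order definition; asserting that the resulting relation is itself a class of $\mathcal M$ is precisely an instance of full class comprehension, available in $\KM$ but not in $\GBC$. In this way each formula $\varphi$ is assigned a forcing relation that is an honest class of $\mathcal M$.

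With the forcing relations in hand, the next step is the truth lemma, proved by induction matching the definition above: $\mathcal M[G]\models\varphi$ if and only if some $p\in G$ forces $\varphi$, where the class-quantifier case invokes the genericity of $G$ and the density formulation just given. Full comprehension in $\mathcal M[G]$ then follows quickly. Given a formula $\varphi(x,\bar a,\bar A)$ with set parameters $\bar a$ and class parameters $\bar A$ of the extension, I would choose names $\bar{\dot a},\bar{\dot A}$ for the parameters and form
\begin{equation*}
\dot B=\Set{(\dot x,p)\mid p\forces_\P\varphi(\dot x,\bar{\dot a},\bar{\dot A})},
\end{equation*}
which is a class of $\mathcal M$ by the comprehension just established, and hence a legitimate class $\P$-name. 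By the truth lemma its valuation $\dot B^G$ is exactly $\Set{x\in\mathcal M[G]\mid\mathcal M[G]\models\varphi(x,\bar a,\bar A)}$, so this collection is a class of $\mathcal M[G]$. Since $\varphi$ was arbitrary, $\mathcal M[G]$ satisfies full class comprehension, completing the verification of $\KM$.

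The main obstacle I anticipate is the class-quantifier step of the forcing-relation recursion. One must check that the density formulation genuinely behaves like a forcing relation---that it is preserved downward, that every formula is decided on a dense set of conditions, and that the truth lemma goes through at this clause---and, above all, that the second-order comprehension of $\KM$ returns the relation as a class at every stage of the induction. Everything else, namely the first-order $\ZFC$ axioms, extensionality for classes, replacement for class functions (which reduces to first-order collection in the extension taking the class function as a parameter), and global choice, rests on the standard preservation properties of tame and pre-tame forcing and is comparatively routine.
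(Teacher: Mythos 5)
Your proposal is correct, and it is essentially the argument of the cited source (the paper itself states this theorem without proof, attributing it to Antos): one preserves the first-order axioms and global choice by the standard pre-tame/tame machinery, then extends Stanley's atomic forcing relation (theorem~\ref{theorem.Tame-forcing-has-definable-forcing-relation}) through the class quantifiers by the density clause you give, using full comprehension in the ground model---with class names treated as class parameters---to see that each such forcing relation is a genuine class, and finally derives comprehension in the extension from the truth lemma applied to the name $\dot B$. This is also exactly the mechanism the paper itself relies on later, when it uses definable forcing relations for second-order assertions over \KM\ models in the proof of its main theorem, so your reconstruction matches both the cited proof and the paper's intended framework.
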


\begin{theorem}[\cite{AntosFriedmanGitman:booleanclassforcing}]\label{theorem.KM+CC-is-preserved-to-forcing-extensions}
Tame forcing preserves $\KM+\CC$ to forcing extensions.
\end{theorem}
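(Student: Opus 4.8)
The plan is to lean on the previous theorem, which already delivers $\KM$ in the extension, and to concentrate entirely on verifying the class-choice scheme $\CC$. So fix a model $\mathcal M=\<M,\in,\mathscr C>\satisfies\KM+\CC$, a tame class forcing notion $\P$, and an $\mathcal M$-generic filter $G\of\P$, with extension $\mathcal M[G]$ whose classes are exactly the realizations $\dot X^G$ of class $\P$-names $\dot X\in\mathscr C$. Fix a second-order formula $\varphi$ and suppose $\mathcal M[G]\satisfies\forall a\in A\,\exists X\,\varphi(a,X,Z)$; I must produce a single class $W\of A\times V$ in $\mathcal M[G]$ with $\varphi(a,W_a,Z)$ for all $a\in A$. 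Choosing names $\dot A,\dot Z\in\mathscr C$ for $A,Z$ and a condition $p\in G$ forcing the displayed antecedent, I would first pass below $p$---a tame forcing restricted below a condition is still tame---so that without loss of generality $\one\forces\forall a\in\dot A\,\exists X\,\varphi(a,X,\dot Z)$.

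The heart of the argument is to convert this into a choice problem about \emph{names} in the ground model, where $\CC$ is available. Because $\P$ is tame, Theorem~\ref{theorem.Tame-forcing-has-definable-forcing-relation} provides a first-order $\P$-definable forcing relation for each fixed formula; and since $\KM$ proves $\ETRord$, the notion $\P$ admits a Boolean completion $\B\in\mathscr C$, which I would use to make sense of the forcing relation $\forces\varphi$ even across the class quantifier $\exists X$ (the Boolean value $\|\exists X\,\varphi\|$ being the relevant class-indexed supremum in $\B$). With the relation in hand, consider in $\mathcal M$ the class $I$ of all pairs $(r,\dot a)$ such that $r\forces\dot a\in\dot A$. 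A density argument below $\one$ shows that for every $(r,\dot a)\in I$ there exist $s\leq r$ and a class name $\dot X$ with $s\forces\varphi(\dot a,\dot X,\dot Z)$. This is precisely an instance of $\CC$ in $\mathcal M$ over the class index $I$: the parameters are $\P$, $\dot A$, $\dot Z$ and the definable forcing relation, and the object to be chosen is the class $\dot X$ (the set-sized condition $s$ can be folded in by global choice). Applying the ground model's $\CC$ yields a coded sequence $\<\dot X_i\mid i\in I>$ of class names together with conditions $s_i$ witnessing $s_i\forces\varphi(\dot a,\dot X_i,\dot Z)$.

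It remains to amalgamate these individual witnesses into one class name $\dot W$ for the sought choosing class and to check that it works. I would define $\dot W$ by a mixing construction, putting the pair $(\mathrm{op}(\dot a,\dot x),t)$ into $\dot W$ whenever $i=(r,\dot a)\in I$, $(\dot x,u)\in\dot X_i$, and $t\leq s_i,u$, where $\mathrm{op}(\dot a,\dot x)$ is the canonical name for the ordered pair, so that in any generic $H$ the slice $\dot W^H_a$ agrees with $\dot X_i^H$ for a pair $i$ with $s_i\in H$ representing $a$. One then verifies $\one\forces$ ``$\dot W\of\dot A\times V$ and $\varphi(a,\dot W_a,\dot Z)$ for every $a\in\dot A$,'' whence $W=\dot W^G$ is the desired witness in $\mathcal M[G]$.

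The main obstacle is exactly this coherence step, and it is where the Boolean completion earns its keep. Different representing pairs $i,i'\in I$ for the same element $a\in\dot A^G$ may carry genuinely different witnessing names $\dot X_i,\dot X_{i'}$, and the associated conditions $s_i$ need not lie in $G$, so one cannot simply read off a canonical witness. The clean resolution is to perform the selection and amalgamation at the level of $\B$, using the maximal principle (fullness) of the Boolean completion to obtain a \emph{single} name whose Boolean value forces $\varphi$ at each coordinate; but that maximal principle for class-sized suprema is itself powered by class choice in $\mathcal M$, which is why the hypothesis $\CC$ in the ground model is indispensable and the argument does not go through for $\KM$ alone. Verifying that the completion behaves well enough---that $\forces\varphi$ is correctly captured by $\B$ for second-order $\varphi$, and that the mixed name is forced to be a total function on $\dot A$---is the portion demanding the most care.
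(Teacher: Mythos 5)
First, a point of comparison: the paper does not prove this theorem at all --- it is quoted from \cite{AntosFriedmanGitman:booleanclassforcing} --- so your attempt can only be measured against the standard argument, not against a proof in the text. Your skeleton (pass to a condition forcing the hypothesis, reduce to a class-choice problem in the ground model over the class $I$ of pairs $(r,\dot a)$, apply $\CC$ there to select witnessing class names, then amalgamate into one name) is the right shape, and you correctly isolate the coherence problem as the crux. But your proposed resolution fails on two concrete points. First, the Boolean completion you invoke is not available: $\KM$ proves $\ETRord$ and hence that class forcings admit \emph{set-complete} Boolean completions, but the value $\boolval{\exists X\,\varphi}$ you want is a supremum indexed by the class names (a hyperclass), and full class-complete Boolean completions of class forcings are in general proper hyperclasses, not classes --- this is a main point of the very paper this theorem is cited from. (In $\KM$ one instead defines the forcing relation for each fixed second-order formula directly by comprehension, clause by clause, as this paper does in the proof of Theorem~\ref{Theorem.Main-technical-result}; no Boolean algebra is needed, and none with class-indexed suprema exists.) Second, the appeal to fullness is circular: the single existential to which you would need to apply the maximal principle is $\exists W\,\forall a\in\dot A\,\varphi(a,W_a,\dot Z)$, and knowing that \emph{this} is forced is precisely the instance of $\CC$ in the extension being proved. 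Applying fullness coordinate-by-coordinate instead runs straight back into the problem you named: an element $a\in A$ has many names, the conditions $s_i$ attached to different names need not lie in $G$ coherently, and the slices of your mixed $\dot W$ are unions of interpretations of incompatible witnesses, which need not satisfy $\varphi$.

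The missing idea is to amalgamate over \emph{ground-model} indices and to postpone the selection of one witness per element of $A$ to the extension, where it can be done definably, with no choice at all. In the ground model, for a single pair $i=(q,\dot a)\in I$ fullness is honest: the class of $s\leq q$ admitting a name $\dot X$ with $s\forces\varphi(\dot a,\dot X,\dot Z)$ is dense below $q$, so take a maximal antichain of such $s$ (global choice), choose names by $\CC$, and mix along the antichain to get one name $\dot X_i$ with $q\forces\varphi(\dot a,\dot X_i,\dot Z)$; a further application of $\CC$ over $I$ codes the whole family $\<\dot X_i\mid i\in I>$. Now form the single name $\dot W$ whose slice at the \emph{check name} $\check i$ is $\dot X_i$; since distinct ground-model indices remain distinct in any extension, $\dot W$ has no coherence problem. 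Finally, inside $\mathcal M[G]$: the filter $G$ is a class there, the evaluation relation $\dot a\mapsto\dot a^G$ is definable from $G$ and the forcing relation, and the ground model's global well-order is available; so define the choice family by assigning to each $a\in A$ the slice of $\dot W^G$ at the least $i=(q,\dot a)\in I$ with $q\in G$ and $\dot a^G=a$, whereupon the truth lemma gives $\varphi(a,(\dot W^G)_i,Z)$. This last step uses only comprehension in $\mathcal M[G]$, which is why the argument closes. (Alternatively, one can bypass names entirely via the Marek--Mostowski bi-interpretation of $\KM+\CC$ with $\ZFCm$ plus a largest, inaccessible cardinal, under which tame class forcing becomes set forcing over the unrolled model.)
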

\begin{proof}
Suppose $\mathcal V=\langle V,\in,\mathscr C\rangle\models\KM+\CC$. Let $\mathcal V[G]=\langle V[G],\in,\mathscr C[G]\rangle$ be a forcing extension by a tame class forcing $\P\in\mathscr C$. The tameness of $\P$ implies that $\mathcal V[G]$ is a model of $\KM$, so it remains to verify class choice $\CC$.

Using class choice, we will argue that whenever $p$ is a condition in $\P$ and \hbox{$p\forces\exists X\varphi(X,\dot A)$}, then there is a $\P$-name $\dot X$ such that $p\forces \varphi(\dot X,\dot A)$. Let $D$ be the dense class of conditions $q$ below $p$ for which there is a class $\P$-name $\dot X_q$ such that $q\forces \varphi(\dot X_q,\dot A)$. The class $D$ exists by comprehension. Let $A$ be a maximal antichain of $D$. Now, using class choice, we can pick for every $q\in A$, a class $\P$-name $\dot X_q$ such that $q\forces\varphi(\dot X_q,\dot A)$. After this, we do the usual mixing argument to build the name $\dot X$, that is $\dot X=\Union_{q\in A}\{(\tau,r)\mid r\leq q, r\forces \tau\in \dot X_q,\tau\in\text{dom}(\dot X_q)\}$.

Now suppose that $\mathcal V[G]$ satisfies $\forall \alpha\exists X\varphi(\alpha,X,A)$. So there is a condition $p\in G$ such that $p\forces \forall \alpha\exists X\varphi(\alpha,X,\dot A)$, where $\dot A$ is a class $\P$-name for $A$. Fix an ordinal $\alpha$. By the argument above, we can build a class $\P$-name $\dot X_\alpha$ such that $p\forces\varphi(\check\alpha,\dot X_\alpha,\dot A)$. Again using class choice, we pick for every $\alpha$, a class $\P$-name $\dot X_\alpha$ and put them all together to form a $\P$-name for the sequence of choices in $\mathcal V[G]$.
\end{proof}

For the following comments, we strengthen our theory to $\GBC^-$, adding the global choice axiom (actually just the axiom of choice suffices).\footnote{Indeed, in the absence of the axiom of choice it is consistent that an $\less\ORD$-distributive forcing adds sets. For an explicit example, see Theorem~5.1 in \cite{SchlichtKaragila}.} Strengthening tameness, a class forcing notion $\P$ is \emph{$\less\Ord$-distributive}, if for any ordinal $\beta$ and any coded $\beta$-sequence $\<D_\alpha\mid\alpha<\beta>$ of open dense classes $D_\alpha\of\P$, the intersection $\Intersect_{\alpha<\beta}D_\alpha$ is dense in $\P$. It is an immediate consequence of $\less\Ord$-distributivity that such forcing is pre-tame, and consequently, such forcing admits a definable forcing relation. The standard distributivity arguments then show that such forcing adds no new sets, and so it is also tame.

Strengthening further, we note that if a forcing notion has $\kappa$-closed dense subclasses for every ordinal $\kappa$, then it follows easily that it is $\less\Ord$-distributive, and consequently it admits a definable forcing relation and adds no new sets.

Typically, the forcing language for forcing over models of second-order set theory includes the membership relation $\in$ and constants for every $\P$-name, for both sets and classes. The case of $\check V$ amounts to a predicate for the ground model sets, where $p\forces \sigma\in\check V$ just in case it is dense below $p$ that a condition forces $\sigma=\check x$ for some set $x$ in $V$.

In this article, we should like to augment the forcing language with a predicate $\check{\mathscr C}$ also for the ground-model classes, defining for a class name $\dot X$ that $p\forces\dot X\in\check{\mathscr C}$ just in case it is dense below $p$ to force $\dot X=\check Y$ for some class $Y\in\mathscr C$. Note that this definition involves second-order quantifiers, which will affect the complexity of forced second-order assertions making use of the $\check{\mathscr C}$ class predicate. But in models of \KM, as in our application, this will cause no problem.

In the proof of our main theorem, we shall make a certain homogeneity argument concerning our main forcing notion. Let us therefore define here that a forcing notion $\P$ is \emph{locally homogeneous}, if for every pair of conditions $p$ and $p'$, there are stronger conditions $q\leq p$ and $q'\leq p'$ such that $\P\restrict q\cong \P\restrict q'$. By carrying the automorphism through the forcing relation, just as in the familiar case of set forcing, it follows from this that if a condition $p$ forces $\varphi(\check a,\check A)$, a statement involving only check names, then every condition $q\in\P$ forces $\varphi(\check a,\check A)$.\goodbreak

Finally, we describe how to construct the forcing extension models. Suppose that $\P$ is a class forcing notion in the model of second-order set theory $\mathcal W=\<W,\in^{\mathcal W},\mathscr C>$, which has a forcing relation for it. A filter $G\of\P$ is $\mathcal W$-generic, if $G\intersect D$ is nonempty for every dense class $D\of\P$ in $\mathscr C$. Every countable model, for example, even an ill-founded model, is easily seen to admit such generic filters.

Define the equivalence relation $\sigma=_G\tau$ to hold when there is some $p\in G$ with $p\forces\sigma=\tau$; and the membership relation $\sigma\in_G\tau$, when some $p\in G$ has $p\forces \sigma\in\tau$. Similarly define the class relations $\dot X=_G\dot Y$ and $\sigma\in_G\dot X$. The forcing extension
${\mathcal W}[G]=\langle W[G],\in^{{\mathcal W}[G]},{\mathscr C}[G]\rangle$ is then formed as equivalence classes of set and class names by the relation $=_G$, which is a congruence with respect to $\in_G$. When the model is transitive or at least well-founded, one may equivalently view this construction as proceeding by a recursive interpretation-of-names definition of values $\sigma_G=\val(\sigma,G)=\set{\val(\tau,G)\mid \exists p\in G\ \<\tau,p>\in\sigma}$, but in the general case, that recursion is not actually sensible. One can augment the structure with the ground-model predicates $\check V$ and $\check{\mathscr C}$, as we discussed above.

Finally, one proves what amounts to a forcing analogue of the \Los-theorem, namely, ${\mathcal W}[G]\satisfies\varphi$ just in case there is some $p\in G$ with $p\forces\varphi$, fulfilling the desired alignment between what is forced and what is true in the forcing extension. See further details in \cite[\S 3]{GitmanHamkinsHolySchlichtWilliams:The-exact-strength-of-the-class-forcing-theorem}.

\bigskip
\section{The class-club-shooting forcing}

We shall now introduce and study the class club-shooting forcing, which enables one to shoot a class club through any fat-stationary class of ordinals. Generalizing the notion from sets, a class $S\of\Ord$ is \emph{fat stationary}, if for every class club $C\of\Ord$ and ordinal $\beta$, there is a closed copy of $\beta+1$ inside $S\intersect C$. The \emph{class club-shooting} forcing $\Q_S$, for a fat-stationary class $S$, is the partial order of all closed sets $c\of S$, ordered by end-extension. Let us now establish the basic properties of this forcing.\goodbreak

\begin{lemma}\label{Lemma.Shooting-club-through-fat-stationary-class}
 Assume $\GBC$, and suppose that $S\of\Ord$ is a fat-stationary class of ordinals. Then the class-club-shooting forcing $\Q_S$ has the following properties:
 \begin{enumerate}
  \item $\Q_S$ is $\less\ORD$-distributive;
  \item it is therefore tame, admits a forcing relation and does not add sets;
  \item it adds a class club $C\subseteq S$;
  \item it preserves the fat-stationarity of any fat stationary subclass $T\of S$; and
  \item it is locally homogeneous.
 \end{enumerate}
\end{lemma}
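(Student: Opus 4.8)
The plan is to isolate the $\less\Ord$-distributivity of clause~(1) as the crux and to read clauses~(2)--(5) off from it together with the general facts assembled in Section~\ref{section.second-order-forcing}. Indeed, (2) needs no further work: $\less\Ord$-distributivity was already noted there to imply pre-tameness, hence the existence of a definable forcing relation, while the usual distributivity argument shows that no new sets are added, so the forcing is tame as well. Everything therefore reduces to the distributivity and to two density arguments, of which the harder reuses the distributivity argument's engine.

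The substance of the lemma is the $\less\Ord$-distributivity, and I would prove it as follows. Fix an ordinal $\beta$, a condition $c$, and a coded sequence $\langle D_\alpha\mid\alpha<\beta\rangle$ of open dense subclasses of $\Q_S$; I aim to build a descending continuous chain $\langle c_\alpha\mid\alpha\le\beta\rangle$ with $c_0=c$ and $c_{\alpha+1}\in D_\alpha$, for then $c_\beta$ refines $c$ and, each $D_\alpha$ being downward closed, lies in $\Intersect_{\alpha<\beta}D_\alpha$. The one real difficulty is the limit stage, where $\Union_{\alpha<\lambda}c_\alpha$ is closed below its supremum (each $c_\alpha$ is an initial segment of the union, by end-extension) but I must adjoin that supremum and know that it lies in $S$; mere stationarity is of no help here, since the supremum is not mine to choose. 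To control it I first fix, using the global well-order, a canonical $D_\alpha$-witness below each condition, and I let $b(\delta)$ be the supremum, over all $\alpha<\beta$ and all conditions $c'\of\delta$, of the maximum of the canonical $D_\alpha$-witness below $c'$; by Replacement $b\colon\Ord\to\Ord$ is a genuine class function, and its closure points form a club $C$. Now \emph{fatness is used exactly in its defining form}: since $S$ is fat, $S\intersect C$ contains a closed copy $E=\{e_\xi\mid\xi\le\beta\}$ of $\beta+1$ lying above $\max(c)$. I then build the chain with the invariant $\max(c_\alpha)=e_\alpha$: at a successor the canonical witness below $c_\alpha$ has maximum below the closure point $e_{\alpha+1}$, so I may append $e_{\alpha+1}\in S$ to it; and at a limit $\lambda$ the required supremum is $e_\lambda=\sup_{\alpha<\lambda}e_\alpha$, which lies in $E\of S$ because $E$ is a closed copy. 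This furnishes $c_\beta$ and proves (1).

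Clause~(3) is a pair of density arguments about the generic class $C=\Union G$. It is a subclass of $S$ because every condition is; it is unbounded because for each ordinal $\eta$ the conditions with maximum beyond $\eta$ are dense, obtained by appending a point of $S$ above $\eta$; and it is closed because any two conditions in the filter are initial segments of a common extension, so $C\intersect(\max(c)+1)=c$ for each $c\in G$, whence every limit point of $C$ already lies in some closed condition. Clause~(5) is handled by a common-maximum trick: given $p$ and $p'$, choose $\mu\in S$ above both maxima and put $q=p\union\{\mu\}$ and $q'=p'\union\{\mu\}$. The conditions below $q$ are precisely the end-extensions $q\union e$, where $e$ ranges over the closed bounded subsets of $S$ lying above $\mu$, and the very same $e$ describe the conditions below $q'$; hence $q\union e\mapsto q'\union e$ is an isomorphism $\Q_S\restrict q\iso\Q_S\restrict q'$.

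Clause~(4) is proved by a density argument that reuses the engine of~(1). Given a fat-stationary $T\of S$, a name $\dot C$ forced to be a class club, an ordinal $\beta$, and a condition, I would build inside $T\intersect\dot C$ a forced closed copy of $\beta+1$. Since no new sets are added, every set-sized initial part of $\dot C$ is decided by conditions, so canonical witnesses deciding membership in $\dot C$ are available; bounding their ordinal effect by a class function as in~(1) yields a club $C^\ast$ of closure points, and the fatness of $T$ supplies a closed copy of $\beta+1$ inside $T\intersect C^\ast$. Running the recursion against this scaffold, successor steps force new elements of $\dot C$ into the copy while the closure of the scaffold keeps limit suprema inside $T$, and the closedness of $\dot C$ places those suprema into $\dot C$ as well. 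Thus the limit-closure problem, resolved by the interaction of closure-point clubs with fatness, is the single genuine obstacle, and it recurs in both~(1) and~(4).
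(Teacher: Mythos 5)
Your clauses (1), (2), (3) and (5) are correct and in essence follow the paper's own route: your canonical-witness bounding function $b$ and its club of closure points play precisely the role of the paper's club of $\Sigma_2$-reflection points, and fatness is then invoked in the same way to thread a closed copy of $\beta+1$ through $S$ intersected with that club, with the limit stages of the chain landing on scaffold points. That part stands.

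The genuine gap is in clause (4), at the step ``successor steps force new elements of $\dot C$ into the copy.'' With a scaffold $E=\{e_\xi\mid\xi\le\beta\}$ of order type $\beta+1$ and one forcing step per scaffold point, nothing ever puts the successor-indexed points $e_{\alpha+1}$ into $\dot C$. You cannot force a \emph{predetermined} ordinal into $\dot C$: which ordinals can be forced into $\dot C$ is dictated by the name (for instance, $\dot C$ could be the check name of a ground-model club that avoids every $e_{\alpha+1}$), so the only mechanism available for getting a scaffold point into $\dot C$ is the one you yourself name at the end — the point must be a \emph{limit} of ordinals forced into $\dot C$, and then forced closedness of $\dot C$ captures it. But your recursion produces exactly one forced-in ordinal in each interval $(e_\alpha,e_{\alpha+1})$, so no successor point $e_{\alpha+1}$ is a limit of forced-in ordinals; your concluding sentence accordingly places only the ``limit suprema'' $e_\lambda$ into $\dot C$. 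The copy you actually obtain inside $T\cap\dot C$ thus has order type equal to the number of limit ordinals $\le\beta$ — in particular it is empty when $\beta$ is finite — rather than $\beta+1$. The repair is what the paper does: run the recursion for $\omega\beta+1$ steps along a scaffold of order type $\omega\beta+1$ inside $T\cap C^\ast$ (fatness supplies this just as readily), forcing one new element of $\dot C$ between consecutive scaffold points, and take the final copy to be the points $e_{\omega\alpha}$ for $\alpha\le\beta$ (indexed so that each is a genuine limit of earlier scaffold points). Every point of that copy is then a limit of forced-in elements of $\dot C$, hence forced into $\dot C$, and lies in $T$ because the entire scaffold does.
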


\begin{proof}
Assume $\GBC$, and suppose that $S\of\Ord$ is a fat-stationary class of ordinals. Let $\Q_S$ be the class-club-shooting forcing for $S$. The fat stationarity of $S$ ensures that any condition in $\Q_S$ can be extended to a longer condition, with order type as long as desired, and so the union of the generic filter will be a class club $C\of S$.

It is easy to see that $\Q_S$ is locally homogeneous: if $c$ and $d$ are conditions in $\Q_S$, then extend them to conditions $c^+$ and $d^+$ having the same supremum, and consider the map $\pi$ defined on extensions of $c^+$ that simply replaces the $c^+$-initial segment of a condition with $d^+$. This is an isomorphism of $\Q_S\restrict c^+$ with $\Q_S\restrict d^+$, verifying local homogeneity.

To prove that the forcing is ${<}\Ord$-distributive, consider any condition $c_0\in \Q_S$ and any coded $\beta$-sequence $\vec D=\<D_\alpha\mid\alpha<\beta>$ of open dense classes $D_\alpha\of\Q_S$. We seek an extension of $c_0$ that is simultaneously in all $D_\alpha$. To find such an extension, apply the reflection theorem to find a class club $E$ of ordinals $\theta$ for which $\langle V_\theta,\in,S\intersect\theta,\vec D\restrict\theta\rangle\prec_{\Sigma_2}\langle V,\in,S,\vec D\rangle$. These structures for $\theta\in E$ therefore form a continuous $\Sigma_2$-elementary chain. Since $S$ is fat stationary, there is a continuous $(\beta+1)$-sequence $\<\theta_\alpha\mid\alpha\leq\beta>$ inside $E\intersect S$, as high as we like. We may assume $c_0\in V_{\theta_0}$ and that $\beta<\theta_0$.

We construct a descending sequence of conditions $c_0\geq c_1\geq\cdots\geq c_\beta$ below $c_0$, ensuring that $c_\alpha\in V_{\theta_{\alpha+1}}$, that $\theta_\alpha\in c_{\alpha+1}$ and that $c_{\alpha+1}\in D_\alpha$ for all $\alpha\leq\beta$. Given $c_\alpha$, add the point $\theta_\alpha$, which is in $S$ and therefore allowed, and extend $c_\alpha\union\singleton{\theta_\alpha}$ to a condition $c_{\alpha+1}\in D_\alpha$ inside $V_{\theta_{\alpha+1}}$, which is possible because this is $\Sigma_2$-elementary in $\langle V,\in,S,\vec D\rangle$. At a limit stage $\lambda$, let $c_\lambda=\Union_{\alpha<\lambda} c_\alpha\union\singleton{\theta_\lambda}$, which simply takes the union of the earlier conditions and adds the supremum $\theta_\lambda$. This is a legal condition because all $\theta_\alpha$ are in $S$. After $\beta$ steps of this construction, the condition $c_\beta$ is an extension of $c_0$ that is inside all the open dense classes $D_\alpha$, and so we have verified distributivity.

It follows that the forcing $\Q_S$ is tame, and therefore has a forcing relation, and from this, it follows easily from $\less\Ord$-distributivity that the forcing $\Q_S$ adds no new sets, as we explained in the previous section.

What remains is to show that the forcing preserves the fat-stationarity of any fat stationary subclass $T\of S$. Suppose that $T\of S$ is fat stationary. We want to see that $T$ remains fat stationary in the forcing extension ${\mathcal V}[C]$. To see this, suppose that $\dot D$ is a $\Q_S$-name for a class club, and fix any ordinal $\beta$. Let $\forces$ be a class forcing relation for $\Q_S$ for $\Sigma^0_2$ assertions in the forcing language with parameters for $T$, $S$, and $\dot D$. Fixing an arbitrary condition $d_0\in \Q_S$, we will argue that there is a stronger condition forcing that $\dot D\cap \check T$ has a closed subset of order-type $\beta+1$. As above, fix a continuous $\Sigma_2$-elementary $(\omega\beta+1)$-chain of structures
 $$\langle V_\theta,\in,T\intersect\theta,S\intersect\theta,\dot D\intersect V_\theta,\forces\restrict V_\theta\rangle\prec_{\Sigma_2}\langle V,\in,T,S,\dot D,\forces\rangle.$$
We build a descending sequence of conditions $d_0\geq d_1\geq\cdots\geq d_{\omega\beta+1}$, such that $d_\alpha\in V_{\theta_{\alpha+1}}$ and $\theta_\alpha\in d_{\alpha+1}$ and $d_{\alpha+1}$ forces another specific ordinal into $\dot D$ above $\theta_\alpha$. By $\Sigma_2$-elementarity, there is such an element below $\theta_{\alpha+1}$. It follows that the final limit condition $d=d_{\omega\beta+1}$ forces that $\dot D$ is unbounded in every $\theta_{\omega\alpha}$ for $\alpha\leq\beta$ and consequently that  $\set{\theta_{\omega\alpha}\mid \alpha\leq\beta}$ is a closed $(\beta+1)$-sequence in the intersection $\dot D\intersect \check T$. Thus, $d$ forces this instance of fat stationarity, and so $T$ remains fat stationary in ${\mathcal V}[C]$, as desired.
\end{proof}

The class club-shooting forcing will be central to the proof of the main theorem. In that context, our fat-stationary class $S$ will itself be Cohen generic, a case for which one can mount an easier direct proof of distributivity. Meanwhile, we thought it worthwhile to provide the general analysis here.
\section{The class Fodor principle is not provable in \KM}

The main theorem follows from the results proved in this section.
\goodbreak

\begin{theorem}\label{Theorem.Main-technical-result}
Every countable model of Kelley-Morse set theory $\mathcal V=\<V,\in,\mathscr C>\models\KM$ has an extension $\mathcal V^+=\<V,\in,\mathscr C^+>\models \KM$, adding new classes but no new sets, with a regressive class function $F:\Ord\to\omega$ that is not constant on any stationary class.
\end{theorem}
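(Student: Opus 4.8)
The plan is to realize the desired $F$ through a family of ``class clubs with empty intersection.'' Concretely, I will arrange in $\mathcal V^+$ a partition $\Ord=\sqUnion_{n<\omega}S_n$ together with class clubs $C_n$ satisfying $C_n\intersect S_n=\emptyset$. Setting $F(\alpha)=n$ for the unique $n$ with $\alpha\in S_n$ gives a total class function $F:\Ord\to\omega$, automatically regressive on ordinals $\geq\omega$, which is all that matters since stationary classes are unbounded. Since each fiber $F^{-1}(n)=S_n$ is disjoint from the class club $C_n$, no fiber is stationary, and hence $F$ is not constant on any stationary class. As a bonus, $\Intersect_n C_n\of\set{\alpha\mid\forall n\ F(\alpha)\neq n}=\emptyset$, so the class club filter is not $\sigma$-closed.

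\emph{The two-stage forcing.} First I would add a generic partition. Consider the forcing whose conditions are set-sized functions $p:\delta_p\to\omega$ for an ordinal $\delta_p$, ordered by end-extension. This forcing is $\less\Ord$-closed---the union of any set-length descending chain of conditions is again a condition---hence $\less\Ord$-distributive: it adds no new sets, admits a forcing relation, and is tame. The generic object is a class function $F_0:\Ord\to\omega$, and setting $S_n=F_0^{-1}(n)$ yields a partition of $\Ord$; a routine density argument shows that each piece $S_n$, and indeed each complement $\Ord\minus S_n=\Union_{m\neq n}S_m$, is fat stationary in the extension, even with respect to the newly added class clubs. This is the sense in which our fat-stationary classes are ``Cohen generic.'' Second, over this extension I would run an $\omega$-length iteration of the class-club-shooting forcing of Lemma~\ref{Lemma.Shooting-club-through-fat-stationary-class}, shooting at stage $n$ a class club $C_n\of\Ord\minus S_n$ through the fat-stationary target $\Ord\minus S_n$. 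The key bookkeeping is that $S_m\intersect S_n=\emptyset$ gives $S_m\of\Ord\minus S_n$ for $m\neq n$, so each piece $S_m$ is a fat-stationary \emph{subclass} of every target $\Ord\minus S_n$ with $n\neq m$. Hence by part~(4) of Lemma~\ref{Lemma.Shooting-club-through-fat-stationary-class} each shooting preserves the fat stationarity of all the not-yet-treated pieces $S_{n+1},S_{n+2},\dots$, so that $\Ord\minus S_n\supseteq S_{n+1}$ remains fat stationary when its turn arrives and the iteration never stalls.

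\emph{Verifying $\KM$ in $\mathcal V^+$.} The generic clubs $C_n$ are the promised new classes, and $F$ may be taken from the first stage. It remains to check that the whole forcing is tame and adds no new sets, whence $\mathcal V^+\satisfies\KM$ by the preservation theorem for tame forcing over $\KM$. Each stage is individually $\less\Ord$-distributive; the substance is to see that the $\omega$-length iteration of club-shootings is itself $\less\Ord$-distributive. I would prove this directly by a single reflection-and-fusion argument in the style of Lemma~\ref{Lemma.Shooting-club-through-fat-stationary-class}: given a condition and a set-indexed family of open dense classes, reflect the relevant parameters to a $\Sigma_2$-elementary chain of set-sized structures indexed by a club inside the (Cohen-generic, hence fat-stationary) partition, then build a descending sequence of conditions simultaneously extending every coordinate and meeting every dense class, using the fat stationarity of the pieces to supply the limit points needed to close off each coordinate club at limit stages. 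The main obstacle is exactly the simultaneous handling of the $\omega$ coordinates at limits---ensuring each coordinate club can be closed off at a common reflection point lying in the appropriate piece---and this is precisely what the genericity of the partition is set up to guarantee. Finally, the forcing is locally homogeneous, so carrying automorphisms through the forcing relation shows that statements about check-names are decided by $\one$; together with tameness this secures full second-order comprehension (including for formulas mentioning the ground-model-class predicate $\check{\mathscr C}$), and hence $\mathcal V^+\satisfies\KM$ with $F$ as required.
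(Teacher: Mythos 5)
Your first two stages (the generic Cohen partition $\Ord=\bigsqcup_n S_n$, the fat stationarity of the pieces, and shooting clubs to kill the stationarity of each fiber) track the paper's construction closely, but your final step fails, and provably so. You take $\mathcal V^+$ to be the \emph{full} extension by the $\omega$-length iteration and propose to prove that the whole iteration is $\less\Ord$-distributive and tame, so that $\KM$ is preserved. This cannot succeed: in the full extension the generic class codes the entire sequence $\langle C_n \mid n<\omega\rangle$, each $C_n$ is a class club, and $\Intersect_n C_n\of\Intersect_n(\Ord\setminus S_n)=\emptyset$; but already $\GB$ proves that every \emph{coded} $\omega$-sequence of class clubs has club (in particular nonempty) intersection --- from the coded sequence define $g(\alpha)=\sup_n\min\bigl(C_n\setminus(\alpha+1)\bigr)$, a class function by comprehension, and the closure points of $g$ form a class club contained in every $C_n$. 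So the full extension is not even a model of $\GBC$, let alone $\KM$, and consequently the whole iteration cannot be $\less\Ord$-distributive (distributive class forcing admits a forcing relation, adds no sets, is tame, and tame forcing preserves $\KM$). The obstacle you flag yourself --- closing off all $\omega$ coordinates at a common reflection point lying in every target --- is not something genericity of the partition can supply: such a point would have to lie in $\Intersect_n(\Ord\setminus S_n)$, which is empty precisely because the $S_n$ partition $\Ord$. Each finite subproduct of club-shootings is indeed $\less\Ord$-distributive (this is the paper's induction using part (4) of Lemma~\ref{Lemma.Shooting-club-through-fat-stationary-class}), but distributivity genuinely and necessarily breaks at $\omega$-many coordinates; no fusion argument can rescue it.

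The missing idea is a symmetric submodel. The paper forces with the full-support product $\Q=\prod_n\Q_n$ but does \emph{not} pass to the full extension $\mathcal V[F][G]$: instead $\mathscr C^+$ consists only of the classes appearing in some finite-stage extension $\mathcal V[F][G\restrict n]$, so each individual club $C_n$ belongs to $\mathcal V^+$ while the sequence $\langle C_n\mid n<\omega\rangle$ is \emph{not} coded there. Being a union of a chain of $\KM$ models with the same sets, $\mathcal V^+$ satisfies $\GBC$ at once; full second-order comprehension requires the further argument that for a parameter $Z\in\mathcal V[F][G\restrict n]$, the forcing relation of the tail forcing $\Q^n=\prod_{m\geq n}\Q_m$, augmented with the ground-model-classes predicate $\check{\mathscr C}$, is definable in that $\KM$ model, and that the tail is locally homogeneous via isomorphisms respecting each finite stage and hence respecting $\mathscr C^+$; therefore the truth of $\varphi^{\mathcal V^+}(x,Z)$ is decided by all conditions alike, and the comprehension class is already definable at stage $n$. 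Your closing appeal to homogeneity and check-names in the full extension does not substitute for this: what needs comprehension is the submodel determined by $\mathscr C^+$, not the (non-$\GBC$) full extension, and homogeneity enters exactly to show that satisfaction in that submodel is settled uniformly by the tail forcing.
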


\begin{proof}
Assume that $\mathcal V=\<V,\in,\mathscr C>$ is a countable model of \KM. We shall perform forcing in several steps and then take a symmetric submodel of the final model.\footnote{In general symmetric extensions are defined by a group of automorphisms and filters of subgroups. In this paper we can give our models a straightforward description without using the whole machinery, and so we prefer to avoid introducing this technical tool. See \cite{GitmanHamkins:Kelley-MorseSetTheoryAndChoicePrinciplesForClasses} for an in-depth analysis of the symmetric models approach.} For the first step of forcing, let $\P$ be the forcing to add a $\mathcal V$-generic Cohen function, whose conditions are functions $f:\alpha\to\omega$ for some ordinal $\alpha$ ordered by extension. Let $F:\Ord\to\omega$ be $\mathcal V$-generic for $\P$.
Since the forcing $\P$ is $\kappa$-closed for every cardinal $\kappa$, it follows that it is tame and adds no new sets. Consider the corresponding forcing extension $\mathcal V[F]=\<V,\in,\mathscr C[F]>\models\KM$.

For each $n<\omega$, let $A_n=\set{\gamma\in\Ord\mid F(\gamma)=n}$ be the pre-image of $n$. We claim that this is a fat-stationary class in the extension $\mathcal V[F]$. To see this, consider any ordinal $\beta$ and suppose that $\dot C$ names a class club. We may strengthen any condition $f_0\in\P$ to force another specific element into $\dot C$ above the domain of $f_0$, and by iterating this $\omega$-many times, we may find a condition $f_1$ extending $f_0$ which forces that $\dot C$ is unbounded in its domain. If $\gamma=\dom(f_1)$, we are now free to extend further to a condition $f_1^+$ for which $f_1^+(\gamma)=n$. This condition therefore forces that $\gamma$ is in $A_n\intersect\dot C$. By now iterating this process $\beta$ many times, we find a condition $f_\beta^+$ that forces a closed copy of $\beta+1$ into $A_n\intersect\dot C$. So $A_n$ is fat stationary in $\mathcal V[F]$.

Let $B_n=\set{\gamma\in\Ord\mid F(\gamma)>n}$, which is the same as $\Union_{m>n} A_m$. Thus, each $B_n$ is the union of fat-stationary classes and consequently fat stationary itself. In $\mathcal V[F]$, let $\Q_n$ be the class-club-shooting forcing for $B_n$. Now let $\Q=\prod_{n<\omega}\Q_n$ be the full-support product in $\mathcal V[F]$ of all these club-shooting forcing notions.\footnote{Everything in the following argument would have worked just as well had we used finite-support instead of full-support.}

For any $n<\omega$, let $\Q\restrict n=\prod_{m\leq n}\Q_m$ be the restriction of the $\omega$-length product $\Q$ to the first $n+1$-many coordinates. This forcing can be seen as first shooting a club into the fat-stationary class $B_0$, and then into $B_1$, and so on, up to $B_n$. By lemma \ref{Lemma.Shooting-club-through-fat-stationary-class}, each of these classes remains fat-stationary in the next extension, since $B_0\fo B_1\fo\cdots\fo B_n$, and so by induction the product $\Q\restrict n$ is $\less\Ord$-distributive, tame, and adds no sets. (We could alternatively argue directly that $\P*\dot\Q\restrict n$ has a dense subclass that is $\kappa$-closed for every $\kappa$.)

Suppose that $G\of\Q$ is $\mathcal V[F]$-generic for the full product forcing, and consider the resulting extension $\mathcal V[F][G]$. This will not be a model of \KM, nor even of \GBC, since it has the sequence of clubs $C_n\of B_n$, added by $\Q$, as a coded sequence, but the intersection $\Intersect_n C_n$ must be empty, since any $\gamma$ in that intersection has no possible value for $F(\gamma)$.

Our desired final model, however, will not be $\mathcal V[F][G]$, but rather a certain symmetric submodel of this model, which we shall argue is a model of \KM. Namely, let $\mathscr C^+$ consist of the classes added by some stage of the forcing $\Q$, that is, the classes in $\mathcal V[F][G\restrict n]$, for some $n<\omega$. Our desired final model is $\mathcal V^+=\<V,\in,\mathscr C^+>$.

We claim that $\mathcal V^+$ is a model of \KM. Notice first that each of the finite-product models $\mathcal V[F][G\restrict n]$ is a model of \KM\ with the same sets as $V$, because $\P*\Q\restrict n$, as we argued above, is ${<}\ORD$-distributive. It follows that $\mathcal V^+=\<V,\in,\mathscr C^+>$, being the union of a chain of \KM\ models, all with the same first-order part, is a model at least of \GBC. This is because any finitely many classes of $\mathscr C^+$ exist together in some finite extension $\mathcal V[F][G\restrict n]$, where we get all the desired instances of first-order class comprehension using those class parameters.

It remains to see that $\mathcal V^+$ is fully a model of \KM. For this, we need to prove the second-order class comprehension scheme. Suppose that $Z\in\mathscr C^+$ and consider the class $\set{x\mid \mathcal V^+\models\varphi(x,Z)}$, where $\varphi$ may have second-order class quantifiers. The parameter $Z$ exists in some $\mathcal V[F][G\restrict n]$. Since this is a model of \KM, the forcing relation for the rest of the product forcing $\Q^n=\prod_{m\geq n}\Q_m$, augmented with the unary predicate $\check{\mathscr C}$ as discussed in section~\ref{section.second-order-forcing}, is definable in $\mathcal V[F][G\restrict n]$. Using the predicate $\check{\mathscr C}$ we can express in the augmented forcing language that a class $X$ is in $\mathscr C^+$ by saying that $X$ is the interpretation of an element of $\check{\mathscr C}$ by the restriction of the generic filter to the first $m$-many coordinates for some $m<\omega$. Furthermore, the product forcing $\Q^n$ is locally homogeneous in $\mathcal V[F][G\restrict n]$ as witnessed by taking products of isomorphisms that are constructed as in the proof of lemma~\ref{Lemma.Shooting-club-through-fat-stationary-class}. Let's argue that the question of whether $\varphi(x,Z)$ holds in $\mathcal V^+=\<V,\in,\mathscr C^+>$ is settled by all conditions in the same way. If there were conditions $p$ and $p'$ such that $p\forces \varphi(\check a,\check Z)$, but $p'\forces \neg\varphi(\check a,\check Z)$, then we could find conditions $q\leq p$ and $q'\leq q$ and a coordinate-wise defined automorphism $\pi$ of the forcing, as above, taking $q$ to $q'$. Now take a generic extension $\mathcal V[H]$ with $q\in H$. Observe first that $\mathcal V[H\restrict n]=\mathcal V[\pi(H)\restrict n]$ since the automorphism $\pi$ acts coordinate-wise. Thus, the definition of $\mathscr C^+$ that results from using $H$ is the same as that yielded by using $\pi(H)$. But then both the statements forced by $q$ and $q'$ regarding $\varphi$ must hold true in $\mathcal V[H]=\mathcal V[\pi(H)]$, which yields the desired contradiction. It follows that the class $\set{x\mid \mathcal V^+\satisfies\varphi(x,Z)}$ exists already as a class in $\mathcal V[F][G\restrict n]$, definable there using the forcing relation, and consequently this class is in $\mathscr C^+$. So $\mathcal V^+$ is a model of \KM.

Finally, notice that in this model, each $B_n$ contains a class club, and furthermore, the sequence of $B_n$'s exists as a coded class $B=\set{(n,\gamma)\mid \gamma\in B_n}$, since this class is definable directly from $F$. Meanwhile, $\Intersect_{n<\omega} B_n$ is empty, since for any ordinal $\gamma$ we have $\gamma\notin B_n$ when $n=F(\gamma)$. \end{proof}

It is an immediate corollary of the proof of theorem~\ref{Theorem.Main-technical-result} that the class club filter may not be $\sigma$-closed in a model of $\KM$.
\begin{corollary}
Kelley-Morse set theory \KM, if consistent, does not prove that the class club filter is $\sigma$-closed. It is relatively consistent with \KM\ that there is a coded sequence $\<B_n\mid n<\omega>$ of classes, each in the club filter, but the intersection $\Intersect_n B_n=\emptyset$ is empty.
\end{corollary}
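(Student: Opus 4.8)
The plan is to read this off directly from Theorem~\ref{Theorem.Main-technical-result}, since the corollary is essentially a reformulation of its conclusion. First I would dispatch the consistency hypothesis: assuming $\KM$ is consistent, its two-sorted first-order formalization has a model, and so by the \Lowenheim-Skolem theorem it has a \emph{countable} model $\mathcal V=\<V,\in,\mathscr C>\satisfies\KM$. This is exactly the input the main technical result demands.

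Next I would apply Theorem~\ref{Theorem.Main-technical-result} to this countable $\mathcal V$, obtaining an extension $\mathcal V^+=\<V,\in,\mathscr C^+>\satisfies\KM$, adding no new sets, together with a regressive class function $F:\Ord\to\omega$ that is not constant on any stationary class. All of the filter-theoretic content is already present in $\mathcal V^+$: as established in the final paragraph of the proof of that theorem, setting $B_n=\set{\gamma\in\Ord\mid F(\gamma)>n}$ yields classes each of which contains a class club, so each $B_n$ lies in the class club filter; the sequence $\<B_n\mid n<\omega>$ is coded by the class $B=\set{(n,\gamma)\mid \gamma\in B_n}$, which is first-order definable from $F$ and hence a class of $\mathcal V^+$; and $\Intersect_{n<\omega}B_n=\emptyset$, since any $\gamma$ in the intersection would require $F(\gamma)>n$ for every $n<\omega$, which is impossible for a value in $\omega$ (take $n=F(\gamma)$).

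Finally I would observe that this single model settles both assertions. The empty intersection is plainly not in the class club filter, so the coded $\omega$-sequence $\<B_n\mid n<\omega>$ witnesses in $\mathcal V^+$ that the class club filter is not $\sigma$-closed, which is the relative consistency claim. And since $\mathcal V^+\satisfies\KM$ while failing $\sigma$-closure of the club filter, $\KM$ cannot prove that the club filter is $\sigma$-closed. I do not expect any genuine obstacle here beyond the passage from bare consistency to a countable model, which is precisely the hypothesis format required by Theorem~\ref{Theorem.Main-technical-result}; every substantive verification has already been carried out inside that proof.
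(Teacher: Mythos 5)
Your proposal is correct and is essentially the paper's own proof: the paper disposes of this corollary with the single remark that ``that situation is exactly what the model in theorem~\ref{Theorem.Main-technical-result} provides,'' and your argument simply unpacks that remark---passing from consistency to a countable model, invoking the main technical result, and reading off the classes $B_n$ and their empty intersection from the final paragraph of its proof.
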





\begin{corollary}
 The theory \KM\ plus the negation of the class Fodor principle, and hence also the negation of $\CC$ and even $\CC_\omega(\Pi^0_2)$, is conservative over \KM\ for first-order assertions about sets.
\end{corollary}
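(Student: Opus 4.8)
The plan is to obtain this as an essentially immediate consequence of Theorem~\ref{Theorem.Main-technical-result}, exploiting the key feature that the extension $\mathcal V^+$ produced there has \emph{exactly the same sets} as the original model, with the same membership relation on sets, adding only new classes. I would argue by contraposition: conservativity means that any first-order assertion about sets provable in $\KM$ together with the negation of the class Fodor principle is already provable in $\KM$, and the contrapositive asserts that if a first-order set sentence $\varphi$ is not a theorem of $\KM$, then it is not a theorem of $\KM$ plus that negation either.

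So suppose $\varphi$ is a first-order assertion about sets (a sentence in the language $\{\in\}$, possibly with set parameters) with $\KM\nvdash\varphi$. Then $\KM+\neg\varphi$ is consistent, and by the downward Löwenheim--Skolem theorem applied to the two-sorted language of second-order set theory, it has a model $\mathcal V=\<V,\in,\mathscr C>$ that is countable in both sorts. Applying Theorem~\ref{Theorem.Main-technical-result} to $\mathcal V$ yields an extension $\mathcal V^+=\<V,\in,\mathscr C^+>\models\KM$ adding new classes but no new sets, carrying a regressive class function $F:\Ord\to\omega$ that is constant on no stationary class; in particular the class Fodor principle fails in $\mathcal V^+$, and hence by Theorem~\ref{Theorem.CC-implies-Fodor} so do $\CC$ and $\CC_\omega(\Pi^0_1)$.

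The crux is then the observation that $\mathcal V$ and $\mathcal V^+$ share the identical first-order part $\<V,\in>$. Because the truth of a first-order assertion about sets depends only on this first-order structure and not on which classes are present, $\mathcal V$ and $\mathcal V^+$ agree on all such assertions; in particular $\mathcal V^+\satisfies\neg\varphi$. Thus $\mathcal V^+$ is a model of $\KM$ together with the negation of the class Fodor principle in which $\varphi$ fails, witnessing that this theory does not prove $\varphi$. This is exactly the desired conservativity, and the corresponding statements for $\CC$ and $\CC_\omega(\Pi^0_1)$ follow by the same argument.

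I do not expect a serious obstacle here, since all the substantive work is already carried out in Theorem~\ref{Theorem.Main-technical-result}; the proof is essentially bookkeeping on top of it. The only two points demanding any care are the passage to a countable model, needed so that the theorem applies, and the explicit verification that a first-order set assertion is insensitive to the second-order (class) part of the structure. The latter is immediate from the fact that $\mathcal V^+$ adds no new sets and alters no memberships among sets, so that $\<V,\in>$ is literally unchanged.
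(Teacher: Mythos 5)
Your proof is correct and takes essentially the same approach as the paper: both obtain the result directly from Theorem~\ref{Theorem.Main-technical-result}, using that the extension $\mathcal V^+$ has the identical first-order part $\<V,\in>$, so that a countable model of $\KM+\neg\varphi$ extends to a model of $\KM$ plus the negation of the class Fodor principle (and hence of $\CC$ and $\CC_\omega(\Pi^0_1)$, by Theorem~\ref{Theorem.CC-implies-Fodor}) in which $\varphi$ still fails. Your write-up merely makes explicit the contraposition and the L\"owenheim--Skolem passage to a countable model, both of which the paper leaves implicit.
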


\begin{proof}
Theorem \ref{Theorem.Main-technical-result} shows that every countable model of \KM\ has an extension, with the same first-order part, in which the class Fodor principle fails. So any first-order statement failing in a model of \KM\ also fails in a model of \KM\ plus the negation of the class Fodor principle.
\end{proof}


\section{Separating the class Fodor principles}\label{sec:separatingPrinciples}

In this section, we shall separate the various Fodor principles we defined in the introduction.

\begin{theorem}
Every countable model $\mathcal V=\<V,\in,\mathscr C>\models\KM+\CC$ has an extension to a model $\mathcal V^+=\<V,\in,\mathscr C^+>\models \KM$, with new classes but no new sets, in which the class $\less\Ord$-Fodor principle holds and indeed $\CC_\kappa$ holds for every cardinal $\kappa$, but the class Fodor principle and the bounded class Fodor principle fail.
\end{theorem}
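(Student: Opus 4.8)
The plan is to rerun the construction of theorem~\ref{Theorem.Main-technical-result}, but indexed by $\Ord$ rather than $\omega$, so that the unique ``bad'' sequence that escapes the symmetric submodel has length $\Ord$ instead of length $\omega$. Starting from a countable $\mathcal V=\<V,\in,\mathscr C>\models\KM+\CC$, I would first force a $\mathcal V$-generic regressive function $F\colon\Ord\to\Ord$ (with $F(\gamma)<\gamma$) using the class forcing whose conditions are set-sized regressive initial segments; exactly as in the main theorem this forcing is $\kappa$-closed for every cardinal $\kappa$, hence tame and adding no sets, and by theorem~\ref{theorem.KM+CC-is-preserved-to-forcing-extensions} we get $\mathcal V[F]\models\KM+\CC$. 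For each ordinal $\beta$ set $B_\beta=\set{\gamma\mid F(\gamma)>\beta}$; just as for the $A_n$ and $B_n$ in the main theorem, a genericity argument shows each $B_\beta$ is fat stationary, and the $B_\beta$ are $\fo$-decreasing in $\beta$. Now let $\Q=\prod_{\beta\in\Ord}\Q_{B_\beta}$ be the product of the class-club-shooting forcings of lemma~\ref{Lemma.Shooting-club-through-fat-stationary-class}, taken with \emph{set support} so that conditions remain set-sized. For $\beta<\Ord$ write $\Q\restrict\beta$ for the product over coordinates below $\beta$; since the $B_\gamma$ are nested, the methods of lemma~\ref{Lemma.Shooting-club-through-fat-stationary-class} show that each $\P*\dot\Q\restrict\beta$ has a dense subclass that is $\kappa$-closed for every $\kappa$, so $\Q\restrict\beta$ is $\less\Ord$-distributive, tame, adds no sets, and preserves $\KM+\CC$. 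Fixing a generic $G\of\Q$, I let $\mathscr C^+$ consist of the classes lying in some bounded-stage extension $\mathcal V[F][G\restrict\beta]$, $\beta<\Ord$, and put $\mathcal V^+=\<V,\in,\mathscr C^+>$.

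The verification that $\mathcal V^+\models\KM$ is identical in spirit to the main theorem: $\mathcal V^+$ is the union of the $\Ord$-length chain of \KM\ models $\mathcal V[F][G\restrict\beta]$, all sharing the first-order part $V$, which yields \GBC\ because any finitely many class parameters already appear together at some bounded stage; and full second-order comprehension follows, for a parameter $Z$ appearing at stage $\beta_0$, from the definability over the \KM\ model $\mathcal V[F][G\restrict\beta_0]$ of the forcing relation for the tail product $\Q^{\ge\beta_0}=\prod_{\gamma\ge\beta_0}\Q_{B_\gamma}$ augmented with the predicate $\check{\mathscr C}$, together with the local homogeneity of $\Q^{\ge\beta_0}$ (acting coordinatewise by the isomorphisms of lemma~\ref{Lemma.Shooting-club-through-fat-stationary-class}), which forces the truth value of $\varphi(x,Z)$ in $\mathcal V^+$ to be settled uniformly by all conditions. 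For the failure of the Fodor principles, observe that $F$ is regressive and that for every $\beta$ the class $B_\beta$ contains the class club $C_\beta$ shot by the $\beta$-th coordinate, which appears already in $\mathcal V[F][G\restrict(\beta+1)]$ and hence lies in $\mathscr C^+$; thus $\set{\gamma\mid F(\gamma)\leq\beta}$ is nonstationary for every $\beta$, so $F$ is not bounded on any stationary class and the bounded class Fodor principle fails in $\mathcal V^+$. Since the class Fodor principle implies the bounded class Fodor principle (the easy direction of lemma~\ref{lem:FodorEquivLessOrdFodor}), the full class Fodor principle fails as well.

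It remains to establish $\CC_\kappa$ in $\mathcal V^+$ for every cardinal $\kappa$, which by theorem~\ref{Theorem.CC-implies-Fodor} yields the class $\kappa$-Fodor principle and hence the class $\less\Ord$-Fodor principle; this is the main obstacle, and it is exactly where the $\Ord$-indexing pays off. Suppose $Z\in\mathscr C^+$ appears at stage $\beta_0$ and that for each $\alpha<\kappa$ there is a witness class $X$ with $\varphi(\alpha,X,Z)$ in $\mathcal V^+$. By the homogeneity of the tail forcing $\Q^{\ge\beta_0}$, satisfaction in $\mathcal V^+$ of a formula with parameters drawn from $\mathcal V[F][G\restrict\beta_0]$ is decided by $\one$, so working inside $\mathcal V[F][G\restrict\beta_0]\models\KM+\CC$ one may invoke its own class-choice principle $\CC$ to choose, for each $\alpha<\kappa$, a $\Q^{\ge\beta_0}$-name $\dot X_\alpha$ forced into $\check{\mathscr C}^+$ with $\one\forces\varphi(\check\alpha,\dot X_\alpha,\check Z)$ relativized to $\mathcal V^+$, obtaining a single coded sequence of such names. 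The delicate point—the real content of the argument—is that this coded sequence of witnesses must itself land in $\mathscr C^+$, i.e.\ must appear at a single bounded stage: each $\dot X_\alpha$, being forced into $\check{\mathscr C}^+$, is interpreted by a bounded portion of the tail generic, and since there are only set-many $\alpha<\kappa$, replacement bounds all these stages by some $\beta_1<\Ord$, whence the whole coded choice is captured already in $\mathcal V[F][G\restrict\beta_1]$ and so lies in $\mathscr C^+$. This verifies $\CC_\kappa$ and completes the construction. The essential contrast with the main theorem is that while the $\Ord$-sequence $\<B_\beta\mid\beta\in\Ord>$ is coded in $\mathcal V^+$ with empty intersection—killing bounded, and hence full, Fodor—every \emph{set}-indexed phenomenon is already captured at a bounded stage, where the ground choice principle $\CC$ survives and delivers the class $\less\Ord$-Fodor principle.
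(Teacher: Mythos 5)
Your construction follows the paper's own proof almost line for line---the same generic regressive $F:\Ord\to\Ord$ added by bounded conditions, the same set-support product of club-shooting forcings over the nested fat-stationary classes $B_\xi$, the same symmetric submodel $\mathscr C^+$ of bounded-stage classes, and the same arguments for $\KM$ in $\mathcal V^+$ and for the failure of the bounded (hence full) class Fodor principle. But your verification of $\CC_\kappa$, which you correctly identify as the heart of the matter, has a genuine gap, in fact two related ones. First, local homogeneity only gives that statements whose parameters are check names are decided the same way by all conditions; it does \emph{not} give fullness, i.e.\ the existence, for each $\alpha<\kappa$, of a single name $\dot X_\alpha$ with $\one\forces\varphi^{\mathcal V^+}(\check\alpha,\dot X_\alpha,\check Z)$. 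Passing from ``$\one$ forces that a witness exists'' to ``there is a name that $\one$ forces to be a witness'' is a maximality/mixing principle, which for class forcing requires mixing names along a possibly $\Ord$-sized maximal antichain; you assert it without argument.

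Second, and more seriously, the bounding step fails. A name $\dot X_\alpha$ that is forced into $\check{\mathscr C}^+$ is forced to be interpreted by $G\restrict\beta$ for \emph{some} $\beta$, but that $\beta$ can depend on the generic: different conditions below $\one$ may force different stages, and no single stage need be forced. (Any name obtained by mixing along an $\Ord$-sized antichain, whose $i$-th piece is a stage-$\beta_i$ name with $\sup_i\beta_i=\Ord$, is exactly of this kind.) Consequently the function $\alpha\mapsto$ ``the stage of $\dot X_\alpha$'' is not defined in $\mathcal V[F][G\restrict\beta_0]$, and replacement cannot be applied to it; your $\beta_1$ does not exist. The repair is the paper's move, which sidesteps fullness entirely: below a condition $p\in G$ forcing the hypothesis, for each $\alpha<\kappa$ the class of conditions forcing some \emph{specific} name for some \emph{specific} bounded stage $\beta_\alpha$ to witness $\varphi(\check\alpha,\cdot,\check Z)$ is open dense; since the tail forcing is $\less\Ord$-distributive, one can intersect these $\kappa$ many open dense classes to find a single condition $p^+\leq p$ deciding such a name and stage for every $\alpha$ simultaneously; only then does $\CC_\kappa$ in the stage model $\mathcal V[F][G\restrict\beta_0]$ produce a coded $\kappa$-sequence of pairs $(\dot X_\alpha,\beta_\alpha)$, whose stages are now genuinely a set, so that $\sup_{\alpha<\kappa}\beta_\alpha<\Ord$ bounds the stage at which the coded sequence of witnesses appears, and densely many conditions (hence one in $G$) force this instance of $\CC_\kappa$ in $\mathcal V^+$.
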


\begin{proof}
Assume that $\mathcal V=\<V,\in,\mathscr C>$ is a model of $\KM+\CC$. We shall follow the proof of theorem~\ref{Theorem.Main-technical-result}, but here we begin by adding a generic regressive function $F:\Ord\to\Ord$, rather than $F:\Ord\to\omega$. Let $\P$ be the class forcing to add such a function $F$ via bounded conditions, and consider the forcing extension $\mathcal V[F]=\<V,\in,\mathscr C[F]>$. Since this forcing is tame, this is a model of $\KM+\CC$ by theorem~\ref{theorem.KM+CC-is-preserved-to-forcing-extensions}.

For any ordinal $\xi$, let $B_\xi=\set{\gamma\mid F(\gamma)>\xi}$, in analogy with the classes $B_n$ in the proof of theorem~\ref{Theorem.Main-technical-result}, and define $\Q_\xi$ to be the class-club-shooting forcing for $B_\xi$. Consider the set-support product $\Q=\Pi_{\xi\in\ORD}\Q_\xi$, and let $\Q\restrict\alpha=\Pi_{\xi\leq\alpha}\Q_\xi$ be initial segments of $\Q$. Let $\dot \Q$ be a $\P$-name for $\Q$.

We argue that $\P*\dot\Q\restrict \alpha$ has a dense subclass that is $\kappa$-closed for every cardinal $\kappa$. Specifically, let $\mathbb D$ be the subclass of $\P*\dot\Q\restrict \alpha$ consisting of conditions that are pairs $(p,\check q)$, such that all the closed sets appearing in $q$ have the same supremum $\gamma$, which is also the maximal element in the domain of $p$, and such that $p(\gamma)>\alpha$. This is easily seen to be dense, since any condition $(p,\dot q)$ can be extended to a condition with a check name in the second coordinate, since the forcing $\P$  does not add sets, and then one can extend all the closed bounded sets appearing in $q$ to have the same supremum and arrange such a $\gamma$ and $p(\gamma)$ as desired. This class is $\kappa$-closed for every $\kappa$, simply by taking unions in each coordinate and defining the value of $p$ at the new maximal element to be above $\alpha$. Thus, $\P*\dot\Q\restrict \alpha$ is $\less\Ord$-distributive and consequently tame, and it adds no new sets.

Let $G\subseteq \Q$ be $\mathcal V[F]$-generic and consider the submodel of $\mathcal V[F][G]$ of the form $\mathcal V^+=\<V,\in,\mathscr C^+>$, where $\mathscr C^+$ are the classes added by some stage of the forcing $\Q$, that is, the classes in $V[F][G\restrict\alpha]$, for some ordinal $\alpha$. Analogous arguments to those given in the proof of theorem~\ref{Theorem.Main-technical-result} show that $\mathcal V^+$ satisfies $\KM$ and that the bounded class Fodor principle fails there, since the function $F$ is not bounded on any stationary proper class. Consequently, the class Fodor principle also fails.

It remains to show that $\CC_\kappa$ holds in $\mathcal V^+$ for every cardinal $\kappa$, which by theorem~\ref{Theorem.CC-implies-Fodor} implies $\less\Ord$-Fodor. Suppose that $\mathcal V^+\models\forall\alpha{<}\kappa\,\exists X\,\varphi(\alpha,X,Z)$ with a class parameter $Z$. Since $Z\in \mathcal V^+$, there must be a stage $\gamma$ such that $Z\in \mathcal V[F][G\restrict\gamma]$, which is a model of $\KM+\CC$ by theorem~\ref{theorem.KM+CC-is-preserved-to-forcing-extensions}. Consequently, $\mathcal V[F][G\restrict\gamma]$ has a forcing relation for the tail forcing $\Q^\gamma$ in the language with the ground-model class predicate $\check{\mathscr C}$. So there is some condition $p\in G^\gamma$ (the generic filter for the tail forcing $\Q^\gamma$ derived from $G$) forcing that for every $\alpha<\check\kappa$, there is a class $X$ in $\mathcal V^+$ such that $\varphi^{\mathcal V^+}(\alpha,X,\check Z)$.  For each $\alpha<\kappa$, the class of conditions $q$ deciding that there is a witness class $X$ added by some specific least stage $\beta$ is dense below $p$. Since the forcing $\Q^\gamma$ is locally homogeneous, all such conditions $q$ must agree on this ordinal $\beta=\beta_\alpha$, the least stage by which $\alpha$ gets its witness class $X$. Since $\mathcal V[F][G\restrict\gamma]\models\KM$, the sequence of $\beta_\alpha$ for $\alpha<\kappa$ is bounded in the ordinals, and if $\beta=\sup_\alpha\beta_\alpha$, then $p$ forces that every $\alpha<\kappa$ has a witness class $X$ in $\mathcal V[F][G\restrict \beta]$ satisfying $\varphi(\alpha,X,Z)$ in $\mathcal V^+$. We now switch ground models and move to $\mathcal V[F][G\restrict\beta]$. In $\mathcal V[F][G\restrict\beta]$, fix an ordinal $\alpha$ and a class $X$ such that $\mathcal V^+\models\varphi(\alpha,X,Z)$. There must be some condition $q\in\Q^\beta$ forcing that $\mathcal V^+\models\varphi(\alpha,\check X,\check Z)$, but then by local homogeneity of $\Q^\beta$, the statement must be forced by all other conditions as well. Now recall that $\mathcal V[F][G\restrict\beta]$ is a model of $\KM+\CC$ by theorem~\ref{theorem.KM+CC-is-preserved-to-forcing-extensions}, and hence it can collect for every $\alpha<\kappa$, a witnessing class $B_\alpha$ such that $\mathcal V^+\models\varphi(\alpha,B_\alpha,Z)$ into a single class $B$. By definition of $\mathcal V^+$, we have $B\in\mathscr C^+$. Thus, we have verified that $\CC_\kappa$ holds in $\mathcal V^+$.
\end{proof}

Essentially the same arguments give the following.

\begin{theorem}
Suppose $\mathcal V=\<V,\in,\mathscr C>$ is a countable model of $\KM+\CC$ and $\kappa$ is a regular cardinal in $V$. Then $\mathcal V$ has an extension $\mathcal V^+=\<V,\in,\mathscr C^+>\models \KM$ in which $\CC_\lambda$ holds for every $\lambda<\kappa$, so, in particular, the $\lambda$-class Fodor principle holds, but the class $\kappa$-Fodor principle fails.
\end{theorem}

Finally, we separate the bounded class Fodor principle from the class $\omega$-Fodor principle.
\begin{theorem}
Every countable model $\mathcal V=\<V,\in,\mathscr C>\models\KM+\CC$ has an extension to a model $\mathcal V^+=\<V,\in,\mathscr C^+>\models \KM$, adding new classes but no new sets, in which the bounded class Fodor principle holds, but the class $\omega$-Fodor principle fails.
\end{theorem}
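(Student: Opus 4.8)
The plan is to run the construction of Theorem~\ref{Theorem.Main-technical-result} essentially verbatim, but starting from a model of $\KM+\CC$ rather than merely $\KM$, and then to exploit the extra choice that survives at each finite stage. Concretely, I would add a $\mathcal V$-generic Cohen function $F:\Ord\to\omega$, set $B_n=\set{\gamma\mid F(\gamma)>n}$, which are fat stationary and satisfy $B_n\fo B_{n+1}$, form the full-support product $\Q=\prod_{n<\omega}\Q_{B_n}$ of the class-club-shooting forcings, and pass to the symmetric submodel $\mathcal V^+=\<V,\in,\mathscr C^+>$ whose classes are exactly those appearing in some finite-stage extension $\mathcal V[F][G\restrict n]$. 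Exactly as in Theorem~\ref{Theorem.Main-technical-result}, $\mathcal V^+\satisfies\KM$ adds no new sets, and the class $\omega$-Fodor principle fails there, witnessed by $F$ itself: each $A_n=\set{\gamma\mid F(\gamma)=n}$ is disjoint from the club $C_n\of B_n$ shot at stage $n$, hence nonstationary, so $F$ is constant on no stationary class. The one new observation at the setup stage is that, since $\mathcal V\satisfies\CC$ and each $\P*\Q\restrict n$ is tame, Theorem~\ref{theorem.KM+CC-is-preserved-to-forcing-extensions} gives $\mathcal V[F][G\restrict n]\satisfies\KM+\CC$; in particular, by Theorem~\ref{Theorem.CC-implies-Fodor}, the full class Fodor principle, and hence the bounded class Fodor principle, holds at every finite stage.

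The substance of the proof is to transfer bounded Fodor from the finite stages up to $\mathcal V^+$, and for this the key technical step is a stationarity-preservation lemma. I would first record that shooting the club $C_j$ through $B_j$ preserves the \emph{stationarity} (not merely the fat-stationarity) of any stationary subclass $T'\of B_j$: this is the single-point version of the argument for clause~(4) of Lemma~\ref{Lemma.Shooting-club-through-fat-stationary-class}, where one takes a $\Sigma_2$-reflecting point $\theta\in T'$ that is also a limit of $B_j$, appends to the given condition a sequence of $B_j$-points converging to $\theta$ and forcing $\dot D$ to be unbounded below $\theta$, and concludes $\theta\in\dot D\intersect\check{T'}$. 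Factoring $\Q\restrict j\cong\Q\restrict(j-1)\times\Q_{B_j}$ and using that $B_j$ is already computed in $\mathcal V[F]$, each passage from stage $j-1$ to stage $j$ is such a club-shooting over the stage-$(j-1)$ model. Since $B_m\of B_j$ whenever $j\le m$, a class $T\intersect B_m$ that is stationary at stage $n$ remains a stationary subclass of $B_j$ through every shooting at stages $j$ with $n\le j\le m$, and so stays stationary through stage $m$. From this I obtain the survival criterion I will use: \emph{if $T$ lives at stage $n$ and $T\intersect B_m$ is stationary at stage $n$ for every $m\ge n$, then $T$ is stationary in $\mathcal V^+$.} Its proof is to take any $\mathcal V^+$-club $D$, note it lives at some finite stage $k$, pick $m>\max(k,n)$, and observe that $T\intersect B_m$ is stationary at stage $m$ while $D$ is still a club there, whence $(T\intersect B_m)\intersect D\neq\emptyset$; so $T$ meets every $\mathcal V^+$-club.

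With the survival criterion in hand, the verification of bounded Fodor is short. Suppose $G:S\to\Ord$ is regressive in $\mathcal V^+$ with $S$ stationary in $\mathcal V^+$, both living at some stage $n$. First, $S\intersect B_m$ is stationary at stage $n$ for every $m\ge n$: indeed $S\intersect C_m$ is stationary in $\mathcal V^+$ (a stationary class meets the club $C_m$ in a stationary class), $C_m\of B_m$, and $\mathcal V^+$-stationarity implies stage-$n$ stationarity since $\mathcal V[F][G\restrict n]$ has fewer clubs. Now I work inside $\mathcal V[F][G\restrict n]\satisfies\KM+\CC$, where the bounded class Fodor principle holds: for each $m\ge n$ the regressive $G$ is bounded on a stationary subclass of $S\intersect B_m$, so I may let $\beta_m$ be the least ordinal with $\set{\gamma\in S\mid G(\gamma)\le\beta_m}\intersect B_m$ stationary. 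Since stationarity is expressible in $\KM$, the map $m\mapsto\beta_m$ is definable, and by Replacement $\beta=\sup_{m\ge n}\beta_m$ is an ordinal of the stage-$n$ model. Then $T=\set{\gamma\in S\mid G(\gamma)\le\beta}$ satisfies $T\intersect B_m\fo\set{\gamma\in S\mid G(\gamma)\le\beta_m}\intersect B_m$, which is stationary at stage $n$ for every $m\ge n$, so the survival criterion makes $T$ stationary in $\mathcal V^+$; and $G$ is bounded by $\beta$ on $T$. This is exactly the bounded class Fodor principle.

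I expect the main obstacle to be the stationarity-preservation step underlying the survival criterion, namely verifying that the argument for Lemma~\ref{Lemma.Shooting-club-through-fat-stationary-class}(4) goes through with a mere single reflected point $\theta\in T'$ (which is all that survives, and all that is needed), being careful that $\theta$ can be chosen a limit point of $B_j$ so that the converging sequence appended to the condition stays inside $B_j$. It is also worth emphasizing why this argument does not also establish the class $\omega$-Fodor principle: the passage to a single bound uses $\beta=\sup_{m}\beta_m$, an operation available for boundedness but with no analogue for constancy, since the stage-$n$ constant values of $G$ on the various $B_m$ need not agree.
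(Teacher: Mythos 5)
The gap is exactly where you predicted it, and it is not a repairable technicality: the stationarity-preservation lemma underlying your survival criterion is false. The chain construction from Lemma~\ref{Lemma.Shooting-club-through-fat-stationary-class} must, at each limit stage, close off the union of the conditions built so far by adding its supremum, and that supremum must lie in $B_j$. If the target $\theta\in T'$ has countable cofinality there are no intermediate limit stages and your single-point argument does go through; but if $\mathrm{cf}(\theta)>\omega$, the construction needs a closed cofinal scaffolding $K\of B_j\intersect\theta$ containing its own limit points, i.e.\ it needs $B_j\intersect\theta$ to contain a club of $\theta$, and for the Cohen-generic classes $B_j$ this fails for every $\theta$ in a suitable stationary subclass. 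Concretely, work over $\mathcal V[F]$ with $j=0$ and let $T'=\set{\theta\in B_0\intersect\Cof_{\omega_1}\mid A_0\intersect\theta \text{ contains a club of }\theta}$, where $A_0=\set{\gamma\mid F(\gamma)=0}$. Cohen genericity makes $T'$ a stationary subclass of $B_0$ in $\mathcal V[F]$: below any condition, run an $\omega_1$-chain of conditions which cofinally decides members of a given club name, writes value $0$ at the successive domains (an increasing continuous, hence club, sequence of ordinals), and finally writes value $1$ at their supremum $\theta$. But shooting $C_0$ through $B_0$ kills $T'$ outright: if $\theta\in T'$ were a limit point of $C_0$, then $C_0\intersect\theta\of B_0$ and the club of $A_0$-points below $\theta$ would be two disjoint clubs in an ordinal of cofinality $\omega_1$, which is impossible; hence $T'\intersect\lim(C_0)=\emptyset$ and $T'$ is nonstationary at stage $0$. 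So club-shooting through these fat classes does not preserve stationarity of arbitrary stationary subclasses, the first step of your criterion's proof collapses, and a variant of the same example (take $\set{\theta\in\Cof_{\omega_1}\mid C_0\intersect A_1\intersect\theta\text{ contains a club of }\theta}$, whose traces on every $B_m$ are stationary at stage $0$ but which $\lim(C_1)$ kills at stage $1$) refutes the survival criterion itself, so your final verification of bounded Fodor is not established. Restricting to countable cofinalities cannot rescue it, since the given stationary class $S$ may be concentrated on $\Cof_{\omega_1}$.

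The paper's proof avoids stationarity preservation altogether---necessarily so, in light of the above. Given regressive $H:S\to\Ord$ appearing at stage $m$, it sets $S_\alpha=\set{\gamma\in S\mid H(\gamma)<\alpha}$ and argues by contradiction: if no $S_\alpha$ is stationary in $\mathcal V^+$, each becomes nonstationary at some finite stage $n_\alpha$, and the crucial point is \emph{local homogeneity} of the product forcing, which makes $n_\alpha$ independent of the generic, so that $\alpha\mapsto n_\alpha$ is already a class at stage $m$; by pigeonhole a single stage $n$ works for unboundedly many $\alpha$, hence (the $S_\alpha$ being increasing) for all $\alpha$. But stage $n$ is a model of $\KM+\CC$, hence of the class Fodor principle by Theorem~\ref{Theorem.CC-implies-Fodor}, while $S$ is still stationary there---contradiction. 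In other words, instead of pushing stationarity of traces upward through the club-shootings, which is impossible in general, the paper pulls the putative failure of bounded Fodor down to a single finite stage where $\CC$-Fodor applies. You assembled the right raw ingredients ($\CC$ at the finite stages, a stage-by-stage analysis), but homogeneity, which your proposal never invokes, is the tool that replaces the false preservation lemma.
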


\begin{proof}
We use the model $\mathcal V^+=\<V,\in,\mathscr C^+>$ from the proof of theorem~\ref{Theorem.Main-technical-result}, and also use all the notation from that proof. The difference is that we now also assume \CC\ in $\mathcal V$. It suffices to show that the bounded class Fodor principle holds in $\mathcal V^+$. Let $H:S\to \Ord$ be a regressive function on a stationary class $S$ in $\mathcal V^+$. The class $H$ is added by some stage $\mathcal V[F][G\restrict m]$ for some $m<\omega$. Let $S_\alpha=\set{\gamma\mid H(\gamma)<\alpha}$, and note that $\alpha<\beta\implies S_\alpha\subseteq S_\beta$. If no $S_\alpha$ is stationary in $\mathcal V^+$, then for each $\alpha$ there is stage $n=n_\alpha$ at which $S_\alpha$ becomes non-stationary in $\mathcal V[F][G\restrict n]$. Since the tail forcing $\Q^m$ is locally homogeneous, this $n_\alpha$ does not depend on $G$ and the function $\alpha\mapsto n_\alpha$ exists already at stage $m$. So there must be some $n$ occurring for unboundedly many $\alpha$. So unboundedly many $S_\alpha$ and hence all $S_\alpha$ are nonstationary already at this stage $\mathcal V[F][G\restrict n]$. But this is a model of \CC, and consequently satisfies the class Fodor theorem, contradicting that $H$ is not bounded on any stationary class in this model.
\end{proof}

\section{The class Fodor principle is invariant by set forcing}

In this section, we shall show that the class Fodor principle is preserved by set forcing and that set forcing cannot make the class Fodor principle hold in a forcing extension if it did not already hold in the ground model. The main preliminary fact is that after set-sized forcing, every new class club contains a ground model class club. This is because if a condition $p\forces\dot C$ is a class club, then the class of ordinals $\alpha$ such that $p\forces\check\alpha\in\dot C$ is closed and unbounded, by the class analogue of the usual argument for $\kappa$-c.c. forcing and club subsets of $\kappa$.

\begin{theorem}\label{Theorem.Fodor-invariant-by-set-forcing}
 The class Fodor principle is invariant by set forcing over models of $\GBC^-$. That is, it holds in an extension if and only if it holds in the ground model.
\end{theorem}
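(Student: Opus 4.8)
The plan is to prove both implications through a single structural observation: by the preliminary fact that every class club of the extension contains a ground-model class club, a ground-model class $X\in\mathscr C$ is stationary in the extension if and only if it is stationary in the ground model, and more generally any class is stationary in the extension if and only if it meets every ground-model class club. I would isolate this equivalence first, since both directions rest on it. I would also record, as the other main tool, that the forcing relation for set forcing is first-order definable over the ground model (set forcing being tame), so that for names $\dot F,\dot S$ and a condition $q$ the ground model can speak of whether $q$ decides $\dot F(\check\gamma)$ and to what value.

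The reverse direction---that the class Fodor principle in $\mathcal V[G]$ implies it in $\mathcal V$---I would prove in contrapositive by transferring a counterexample upward. If $F\colon S\to\Ord$ is regressive on a stationary class $S$ in $\mathcal V$ but constant on no stationary subclass there, then $S$ remains stationary in $\mathcal V[G]$ by the equivalence above, and $F$ remains regressive. Were $F$ constant with value $\alpha$ on some $T\of S$ stationary in $\mathcal V[G]$, then the ground-model class $B_\alpha=\set{\gamma\in S\mid F(\gamma)=\alpha}\supseteq T$ would be stationary in $\mathcal V[G]$, hence stationary in $\mathcal V$, making $F$ constant on a ground-model stationary class---a contradiction. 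This direction uses no choice and does not even require the Fodor principle in $\mathcal V$.

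The forward direction is the hard one, and the central difficulty is exactly the class-choice obstacle that makes the naive proof fail: if each preimage $F^{-1}(\set\alpha)$ is nonstationary one wants to choose a ground-model club avoiding each, but there is no class-choice to assemble these clubs into a coded sequence. My plan is to route around this using the definable forcing relation together with the Fodor principle in $\mathcal V$. Fix names $\dot F,\dot S$ and a condition $p\in G$ forcing that $\dot F$ is regressive on the stationary class $\dot S$. For each $q\le p$, define in $\mathcal V$ the class $T_q=\set{\gamma\mid q\forces\check\gamma\in\dot S\text{ and }q\text{ decides }\dot F(\check\gamma)}$ and the function $d_q\colon T_q\to\Ord$ recording the decided value; by regressivity below $p$ each $d_q$ is a regressive class function of $\mathcal V$, and the family $\<(T_q,d_q)\mid q\le p>$ is coded in $\mathcal V$ because $\P$ is a set and the forcing relation is definable. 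Crucially $S=\Union\set{T_q\mid q\in G,\ q\le p}$, a union indexed by the set $G\intersect\P$.

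To finish, I would argue by contradiction from the assumption that $F$ is constant on no subclass stationary in $\mathcal V[G]$. First, no $q\in G$ can have $T_q$ stationary in $\mathcal V$: otherwise the class Fodor principle in $\mathcal V$ makes $d_q$ constant with some value $\alpha$ on a stationary subclass $X\of T_q$, and since $q\in G$ forces $\dot F=d_q$ on $T_q$, the class $X$ would be a subclass of $S$ stationary in $\mathcal V[G]$ on which $F$ is constant. Hence every $q\in G$ below $p$ lies in the ground-model set $N=\set{q\le p\mid T_q\text{ is nonstationary in }\mathcal V}$. Now I use that, by theorem~\ref{th:FodorEquivNormalFilter}, the class club filter of $\mathcal V$ is normal, and that a normal class club filter is closed under set-indexed intersections---the usual padding argument, replacing a coded set-indexed family $\<A_i\mid i\in I>$ of club-filter classes by the $\Ord$-sequence equal to $A_i$ on $I$ and to $\Ord$ elsewhere and taking the diagonal intersection. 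Applied to $\<\Ord\minus T_q\mid q\in N>$, a coded family in $\mathcal V$ of classes each containing a club, this yields a single class club $W$ of $\mathcal V$ disjoint from $\Union_{q\in N}T_q$. Since the conditions of $G$ below $p$ lie in $N$, we get $S\of\Union_{q\in N}T_q$, whence $W\intersect S=\emptyset$; but $W$ is also a class club of $\mathcal V[G]$ and $S$ is stationary there, a contradiction. I expect the set-indexed completeness of the normal class club filter, and the verification that the $T_q$ and $d_q$ are genuinely coded and regressive via the definable forcing relation, to be the steps requiring the most care.
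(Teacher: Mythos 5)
Your reverse direction is fine (it is the paper's argument in contrapositive form), and much of your forward machinery is sound: the definable forcing relation, the classes $T_q$ and decided-value functions $d_q$, the identity $S=\Union\set{T_q\mid q\in G,\ q\le p}$, and the application of ground-model Fodor to $d_q$ in the case that some $T_q$ with $q\in G$ is stationary. But there is a genuine gap at the step where you form $N=\set{q\le p\mid T_q\text{ is nonstationary in }\mathcal V}$ and call it a ground-model set. Nonstationarity of $T_q$ asserts the existence of a class club disjoint from $T_q$, a $\Sigma^1_1$ statement, and $\GBC^-$ has class comprehension only for first-order formulas with class parameters. A second-order-definable subcollection of the set $\P$ need not be a class of the model at all---the rule that a subclass of a set is a set applies only to collections that already are classes---and $\Sigma^1_1$-separation is not provable in $\GBC^-$; it sits properly higher in the hierarchy of section~\ref{sec:second-order}. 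Without $N$ as a set, your padding/diagonal-intersection lemma cannot be applied: that lemma genuinely requires every member of the coded set-indexed family to lie in the club filter, so the index must be restricted to the nonstationary pieces, and that restriction is exactly what you cannot form. Nor can you dodge this with a density argument (``the conditions $q$ with $T_q$ stationary are dense below $p$, so $G$ meets one''): genericity of $G$ applies only to dense subclasses that are classes of $\mathcal V$, and that collection has the same $\Pi^1_1$-definability problem.

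The paper's proof shows how to avoid the obstruction, and the natural repair essentially converts your argument into theirs. Instead of asking which $T_q$ are stationary, apply the class Fodor principle of $\mathcal V$ to the first-order definable selection function $\alpha\mapsto p_\alpha$, where $p_\alpha$ is the least condition (in a fixed well-ordering of the set $\P$) forcing $\check\alpha\in\dot S$ and deciding $\dot F(\check\alpha)$; the domain of this function is a ground-model class containing $S$, hence stationary, and Fodor (applied after coding $\P$ into an ordinal) stabilizes a single condition $p$ on a stationary class, so that $T_p$ is stationary---no comprehension on the family of $T_q$'s is ever needed. A second application of Fodor to the decided values then yields a ground-stationary class on which $p$ forces $\dot F$ to be constant. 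The price is that this $p$ need not lie in $G$, which is exactly why the paper closes with its relativization step: running the whole argument below an arbitrary condition $r$ shows that no condition whatsoever forces a violation of the class Fodor principle, and hence no condition in $G$ does.
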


\begin{proof}
Assume $\GBC^-$, and consider a set-forcing extension $\mathcal V[G]=\<V[G],\in,\mathscr C[G]>$ of $\mathcal V=\<V,\in,\mathscr C>$, where $G\of\P\in V$ is $\mathcal V$-generic.

Suppose first that the class Fodor principle holds in the ground model $\mathcal V$, and suppose that $F:S\to\Ord$ is a class function in the extension $\mathcal V[G]$ that is regressive on the stationary class $S\of\Ord$ in $\mathscr C[G]$. Let $\dot S$ and $\dot F$ be class $\P$-names for which $S=\dot S_G$ and $F=\dot F_G$. In the ground model $\mathcal V$, for each ordinal $\alpha$ choose a condition $p_\alpha\in\P$ forcing $\check\alpha\in\dot S$, if possible, and deciding the value of $\dot F(\check\alpha)$. Since $S$ is stationary in the extension and therefore meets all ground-model class clubs there, it follows that $p_\alpha$ is defined on a stationary class of ordinals. Since $\P$ is a set, it follows by the class Fodor principle that $p_\alpha=p$ is constant on a stationary class $S_0$ of ordinals. For $\alpha\in S_0$, let $E(\alpha)$ be the value of $\dot F(\check\alpha)$ that is forced by $p$. This is a regressive function on $S_0$ in the ground model, and so by the class Fodor principle, it is constant on a stationary subclass $S_1\of S_0$. So $p$ forces that $\dot F$ is constant on $\check S_1$, which remains a stationary class in $\mathcal V[G]$, since as we observed before the theorem, every new class club contains a ground-model class club. So $p$ forces this instance of the class Fodor principle in $\mathcal V[G]$. In particular, by relativizing below any given condition, there can be no condition forcing a violation of the class Fodor principle in $\mathcal V[G]$, and so it must hold there, as desired.

Conversely, assume that the class Fodor principle holds in a set-forcing extension $\mathcal V[G]=\<V[G],\in,\mathscr C[G]>$ of $\mathcal V=\<V,\in,\mathscr C>$, and that $F:S\to\Ord$ is a regressive class function on a stationary class $S$ in the ground model. Since every new class club contains a ground model class club, $S$ remains stationary in $\mathcal V[G]$, and so by the class Fodor principle in $\mathcal V[G]$, the function $F$ is constant on a stationary class in $\mathcal V[G]$. So $F^{-1}(\singleton{\alpha})$ is a stationary class in $\mathcal V[G]$ for some ordinal $\alpha$. But this class exists already in the ground model, and therefore must be stationary there. So the class Fodor principle holds in $\mathcal V$, as desired.
\end{proof}

The downward absoluteness of the class Fodor principle can be easily generalized to $\Ord$-c.c.~class forcing, for which also every new class club contains a ground model class club.

\begin{question}\label{Queston.Fodor-invariant-by-class-forcing?}
 Is the class Fodor principle invariant by pre-tame class forcing? Is it preserved by such forcing? By $\Ord$-c.c.~class forcing?
\end{question}

\section{Fodor on higher class well orders}

Let us consider an analogue of Fodor on class well-orders beyond $\Ord$. Namely, if $\Gamma$ is a class well-order of uncountable cofinality, then we have sensible notions of \emph{club in $\Gamma$} and \emph{stationary in $\Gamma$}, as well as notions of \emph{regressive} functions on $\Gamma$.

First, let's notice that class well-orders have cofinality at most $\Ord$.

\begin{lemma}\label{Lemma.Cofinality-of-class-orders}
Assume \GBC. Every class well-order $\Gamma$ contains a closed unbounded suborder of type $\Ord$ or type $\kappa$ for some regular cardinal $\kappa$.
\end{lemma}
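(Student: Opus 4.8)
The plan is to prove the slightly stronger statement that $\Gamma$ has a \emph{closed unbounded} (i.e.\ cofinal) suborder of the stated type, splitting into two cases according to whether some set is cofinal in $\Gamma$. Throughout I would use the global well-order $<^*$ of $V$ in type $\Ord$ provided by \GBC; when $\Gamma$ is a proper class, restricting $<^*$ to the field of $\Gamma$ gives a class bijection $g\colon\Ord\to\Gamma$. We may assume $\Gamma$ has no greatest element, since otherwise the singleton of that element is already a (trivial) closed unbounded suborder.

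In the first case, some set $B\of\Gamma$ is cofinal in $\Gamma$. Then $\delta=\ot(B,<_\Gamma)$ is a limit ordinal below $\Ord$ (it is a limit because $\Gamma$, hence $B$, has no greatest element), and I would set $\kappa=\cof(\delta)$, an infinite regular cardinal. A routine continuous cofinal construction inside $B$ of length $\kappa$ — taking least $<_\Gamma$-upper bounds at limit stages, which exist since any subset of $B$ of size $\less\kappa$ is bounded in $B$ and hence in $\Gamma$ — yields a continuous strictly increasing $\kappa$-sequence whose range is a closed unbounded suborder of $\Gamma$ of type $\kappa$. This case is entirely set-sized, so ordinary $\ZFC$ in the first-order part suffices.

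In the second case, no set is cofinal in $\Gamma$; equivalently, every subset of $\Gamma$ is bounded in $\Gamma$ (so in particular $\Gamma$ is a proper class). Here I would build a continuous, strictly increasing, cofinal sequence $\<x_\alpha\mid\alpha<\Ord>$ by recursion: $x_0$ the $<_\Gamma$-least element; $x_{\alpha+1}$ the $<^*$-least element of $\Gamma$ lying strictly $<_\Gamma$-above both $x_\alpha$ and $g(\alpha)$, which exists because $\Gamma$ has no greatest element; and at a limit $\lambda<\Ord$, $x_\lambda$ the least $<_\Gamma$-upper bound of $\set{x_\beta\mid\beta<\lambda}$, which exists precisely because that set is bounded in $\Gamma$ by the case hypothesis. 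The resulting class $C=\set{x_\alpha\mid\alpha<\Ord}$ is closed by continuity at limits, and cofinal because every $g(\alpha)\in\Gamma$ satisfies $g(\alpha)<_\Gamma x_{\alpha+1}\in C$; and $\alpha\mapsto x_\alpha$ is an order isomorphism of $\Ord$ with $C$, so $C$ has type $\Ord$.

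The one point requiring genuine care — and the main obstacle — is that the second case uses a recursion of length $\Ord$ producing a \emph{class} function, whereas \GBC\ does not prove elementary transfinite recursion of length $\Ord$ in general. The recursion is nonetheless legitimate in \GBC\ because each of its partial stages is set-sized: for every ordinal $\alpha$ the unique $\alpha$-approximation $f\colon\alpha\to\Gamma$ obeying the three recursion clauses is a \emph{set}, and its existence and uniqueness follow from ordinary set recursion in the first-order part $V$, using $<^*$ and $\Gamma$ merely as class parameters. One then defines $C$ directly by first-order class comprehension, declaring $x_\alpha=y$ to hold iff some set approximation $f$ has $\alpha\in\dom f$ and $f(\alpha)=y$; the approximations cohere by uniqueness, so this yields a genuine class satisfying all the recursion clauses. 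This is exactly the feature distinguishing the present recursion from the problematic iterated-truth-predicate recursions, whose stages are proper classes: here every stage is a set, so no appeal to $\ETR_{\Ord}$ is needed and plain \GBC\ suffices.
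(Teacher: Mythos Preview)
Your proof is correct and follows essentially the same approach as the paper: both split into the two cases according to whether a set is cofinal in $\Gamma$, and in the proper-class case both build a continuous $\Ord$-sequence by recursion, taking suprema at limits and ensuring cofinality by dominating a canonical enumeration at successor steps (you use the global well-order $g(\alpha)$ where the paper uses $V_\alpha$). Your added paragraph justifying why the length-$\Ord$ recursion is legitimate in \GBC\ --- because each partial approximation is a set --- is a worthwhile elaboration that the paper leaves implicit.
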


\begin{proof}
Suppose that $\Gamma$ is a class well-order. If some set is unbounded in the order $\Gamma$, then we can find a cofinal $\kappa$-sequence for some regular cardinal $\kappa$. So assume that no set is unbounded in the order $\Gamma$. In this case, let $a_0$ be the least element of $\Gamma$, and define $a_{\alpha+1}$ to be the $\Gamma$-least element above $a_\alpha$ and above all elements of $V_\alpha$, and $a_\lambda$ to be the supremum of $a_\alpha$ for $\alpha<\lambda$, when $\lambda$ is a limit ordinal. It follows that the class $\set{a_\alpha\mid\alpha\in\Ord}$ is an unbounded subclass of $\Gamma$ of order-type $\Ord$.
\end{proof}

Thus, every class well-order $\Gamma$ has a \emph{cofinality} $\kappa\leq\Ord$, which is the smallest order-type of any closed unbounded subclass.

Let us briefly note that in \GB, even without the global axiom of choice (or \AC\ for sets), one may always take the underlying class of a proper class well-order $\Gamma$ to be $\Ord$. The reason is that if $\Gamma=\<X,\triangleleft>$ is a proper class well-order, then the underlying class $X$ is stratified by $\in$-rank. So we can enumerate the class $X$ in a set-like well-ordered manner, enumerating first by rank and then by the $\Gamma$-order within each rank. This provides a bijection of $X$ with $\Ord$, and so $\Gamma$ is isomorphic to a relation on $\Ord$.

\begin{theorem}
 Assume \GBC\ and the class Fodor principle. Then for any class well-order $\Gamma$, every regressive class function $F:S\to\Gamma$ defined on a stationary subclass $S\of\Gamma$ is bounded on a stationary subclass $T\of S$.
\end{theorem}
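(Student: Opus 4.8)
The plan is to reduce the statement to an application of the ordinary class Fodor principle (or, in one case, the classical Fodor lemma) by pulling $F$ back along a cofinal club of canonical order type. Since the notions of club, stationary, and regressive presuppose that $\Gamma$ has uncountable cofinality, Lemma~\ref{Lemma.Cofinality-of-class-orders} furnishes a closed unbounded suborder $C\of\Gamma$ whose order type is either $\Ord$ or an uncountable regular cardinal $\kappa$; write $\mu$ for this order type and let $e\colon\mu\to C$ be the (necessarily continuous) order isomorphism. First I would observe that intersecting with a club preserves stationarity, so $S\intersect C$ is stationary in $\Gamma$, and that $e^{-1}$ transports stationary subclasses of $C$ to stationary subsets of $\mu$ while $e$ transports them back, using that the image and preimage of a club under the continuous cofinal map $e$ is again a club. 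Thus it suffices to work with $S'=e^{-1}(S\intersect C)$, a stationary subclass of $\mu$.

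Next I would define an auxiliary regressive function on $\mu$ that records how high $F$ reaches inside $C$. For $\alpha\in S'$, set $\bar F(\alpha)$ to be the least $\beta$ with $F(e(\alpha))\triangleleft e(\beta)$; such a $\beta\leq\alpha$ exists because $F$ is regressive, so $F(e(\alpha))\triangleleft e(\alpha)$. The key point is that $\bar F$ is \emph{strictly} regressive on the limit points of $\mu$: when $\alpha$ is a limit ordinal, continuity of $e$ gives $e(\alpha)=\sup_{\beta<\alpha}e(\beta)$, so $F(e(\alpha))\triangleleft e(\alpha)$ forces $F(e(\alpha))\triangleleft e(\beta)$ for some $\beta<\alpha$. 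Since the limit points form a club, $S'\intersect\Lim$ is still stationary, and $\bar F$ is a genuine regressive class (respectively set) function there, definable from $F$, $C$, and $e$ by class comprehension.

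Now I would apply Fodor: the class Fodor principle if $\mu=\Ord$, and the classical Fodor lemma on $\kappa$ (available in $\ZFC$, hence in $\GBC$) if $\mu=\kappa$. Either way $\bar F$ is constant with some value $\beta_0$ on a stationary $T'\of S'\intersect\Lim$. Transporting back, $T=e(T')$ is stationary in $\Gamma$, and for every $x=e(\alpha)\in T$ we have $F(x)\triangleleft e(\beta_0)$, so $F$ is bounded by $e(\beta_0)\in\Gamma$ on the stationary subclass $T\of S$, as desired.

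The main obstacle is the definition of $\bar F$ together with the verification that it is regressive. The naive attempt to ``round $F(x)$ down to $C$'' fails at successor stages, since $F(x)$ may lie strictly between two consecutive members of $C$, making the rounded value equal to $\alpha$ rather than smaller; restricting to the club of limit points, where continuity of $e$ does the work, is exactly what repairs this. The remaining bookkeeping---that stationarity passes back and forth across $e$, and that the two cofinality cases are handled uniformly---is routine.
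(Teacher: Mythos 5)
Your argument is correct and is essentially the paper's proof: both invoke Lemma~\ref{Lemma.Cofinality-of-class-orders} to obtain a club $C\of\Gamma$ of order type $\kappa\leq\Ord$, press the values of $F$ on $S\intersect C$ into $C$, apply the class Fodor principle (or ordinary Fodor when $\kappa<\Ord$), and convert the resulting constant value into a bound. The only substantive difference is the choice of pressing function, and here your closing diagnosis of the alternative is backwards. The paper presses \emph{down}, setting $F_0(a)=\sup\set{b\in C\mid b\leq F(a)}$; this supremum lies in $C$ by closedness and satisfies $F_0(a)\leq F(a)\triangleleft a$, so it is regressive at \emph{every} point of $S\intersect C$, successors included---if $F(a)$ lies strictly between consecutive members $e(\gamma)\triangleleft e(\gamma+1)$ of $C$, rounding down yields $e(\gamma)$, still below $a$. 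What genuinely fails at successor stages is your rounding \emph{up}: the least $\beta$ with $F(e(\alpha))\triangleleft e(\beta)$ can equal $\alpha$ itself in exactly that situation, and your restriction to the club of limit points is the correct repair for that defect. So both variants work: yours needs the limit-point detour, while the paper's round-down does not, with the constant value $c_0$ of $F_0$ bounding $F$ on the stationary class by the next element of $C$ above $c_0$.
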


\begin{proof}
Suppose that $F: S\to\Gamma$ is regressive on a stationary class $S\of\Gamma$. By lemma \ref{Lemma.Cofinality-of-class-orders}, there is a closed unbounded subclass $C\of\Gamma$ of order type $\kappa\leq\Ord$. It is easy to see that $S\intersect C$ remains stationary in $\Gamma$. Define a class function $F_0:C\to C$ $F_0(a)=\sup\set{b\in C\mid b\leq F(a)}$, which presses the value of $F(a)$ down to $C$. The class function $F_0$ is regressive on $C$, and is therefore isomorphic to a regressive function on $\kappa$. By the class Fodor principle (needed only for the case $\kappa=\Ord$), it follows that $F_0\restrict T$ is constant for some stationary subclass $T\of S\intersect C$. The class $T$ remains stationary in $\Gamma$, and it follows that $F(a)$ is pressed down always to the same element of $C$ for $a\in T$. Thus, $F(a)$ is bounded by the next element of $C$ for $a\in T$, and so $F$ is bounded on a stationary subclass of $S$.
\end{proof}

One cannot expect in general that regressive functions on class well-orders exceeding $\Ord$ will be constant on a stationary subclass, since if $\Gamma$ is a class well-order exceeding $\Ord$, then by lemma \ref{Lemma.Cofinality-of-class-orders}, there is a closed unbounded subclass $C\of\Gamma$ of order-type at most $\Ord$, and we can define $F(\gamma)$ to be the $\Gamma$-order type of the $\Gamma$-predecessors of $\gamma$ in $C$, that is, the set $\set{\alpha\in C\mid \alpha<_\Gamma\gamma}$. This order-type is strictly less than $\Ord$, and so the function $F$ is regressive on the part of $\Gamma$ beyond $\Ord$, but it cannot be constant on any unbounded subclass of $\Gamma$.

\section{The strength of the class Fodor principle}\label{Section.Questions}
It follows from a result of Ali Enayat about expansions of models of $\ZFC$ to models of $\GBC+\CC(\Sigma^1_1)$ that the theory $\GBC$ together with the class Fodor principle is equiconsistent with $\ZFC$.
\begin{theorem}[Enayat \cite{Enayat:BarwiseSchlipfSetTheory}]
Every countable recursively saturated model of $\ZFC$ has an expansion to a model of $\GBC+\CC(\Sigma^1_1)$ with the same sets. \footnote{If the $\ZFC$-model has a definable global well-order, then its definable classes satisfy $\GBC+\CC(\Sigma^1_1)$. Otherwise, we force with $\Add(\Ord,1)$ to add a global well-order class and the classes generated by the generic global well-order satisfy $\GBC+\CC(\Sigma^1_1)$.}
\end{theorem}
A model of $\ZFC$ is said to be \emph{recursively saturated} if it realizes every finitely realizable recursive type $p(x,a)$ with parameter $a$, where a type $p(x,y)$ is \emph{recursive} if the collection of \Godel-codes of formulas in the type is a recursive set. By closing under realizations of recursive types in $\omega$-many steps, it can be shown that every countable model of $\ZFC$ has a countable recursively saturated elementary extension.
\begin{corollary}\label{cor:GBCEquiconsistent}
The theory $\GBC$ together with the class Fodor principle is equiconsistent with $\ZFC$ and is conservative over $\ZFC$ for all first-order assertions.
\end{corollary}
\begin{proof}
This follows immediately from Enayat's theorem by observing that every countable model of $\ZFC$ has a countable recursively saturated elementary extension, and the elementary extension has an expansion to a model of $\GBC+\CC(\Sigma^1_1)$, and so, in particular, the Fodor principle holds there.
\end{proof}
In a further work it would be interesting to study in which natural models of $\ZFC$ or $\GBC$ the class Fodor principle holds or fails.

\begin{question}
Suppose $0^\#$ exists, and consider the \GBC\ model $\<L,\in,\text{\rm Def}(L)>$, where $\text{\rm Def}(L)$ has only the first-order definable classes with set parameters. Does this model satisfy the class Fodor principle?
\end{question}

This model has some weak forms of Fodor. Namely, if $F:S\to\ORD$ is definable from the indiscernible parameters $i_{\xi_0},\ldots,i_{\xi_n}$ and some indiscernible $i_\xi$ with $\xi>\xi_n$ is in $S$, then $F$ is constant on a stationary subclass. In this case, all indiscernibles above $i_\xi$ must also be in $S$, and moreover, $F$ has the same value on all of them. But there are stationary classes in this model, such as $\Cof_\omega^L$, which do not contain any indiscernibles, so that observation does not settle the question.

The question is related to stationary reflection. Namely, if $S\intersect\kappa$ is stationary in $\kappa$ for some large enough $\kappa$, where $L_\kappa\elesub L$, then consider $F\restrict\kappa$. There must be $\alpha<\kappa$ with $F^{-1}(\singleton{\alpha})$ actually stationary in $\kappa$, hence definably stationary in $\kappa$. Apply elementarity. So $F$ has constant value $\alpha$ on a stationary class.

\newcommand{\etalchar}[1]{$^{#1}$}


\begin{thebibliography}{GHH{\etalchar{+}}17}

\bibitem[AFG]{AntosFriedmanGitman:booleanclassforcing}
Carolin Antos, Sy~David Friedman, and Victoria Gitman.
\newblock Boolean-valued class forcing.
\newblock {\em manuscript under review}.

\bibitem[Ant18]{antos:ClassForcingInClassSetTheory}
Carolin Antos.
\newblock Class forcing in class theory.
\newblock In {\em The hyperuniverse project and maximality}, pages 1--16.
  Birkh\"auser/Springer, 2018.

\bibitem[Ena]{Enayat:BarwiseSchlipfSetTheory}
Ali Enayat.
\newblock A {B}arwise-{S}chlipf theorem for set theory.
\newblock {\em In progress}.

\bibitem[Fri00]{friedman:classforcing}
Sy~D. Friedman.
\newblock {\em Fine structure and class forcing}, volume~3 of {\em de Gruyter
  Series in Logic and its Applications}.
\newblock Walter de Gruyter \& Co., Berlin, 2000.

\bibitem[GH]{GitmanHamkins:Kelley-MorseSetTheoryAndChoicePrinciplesForClasses}
Victoria Gitman and Joel~David Hamkins.
\newblock Kelley-{M}orse set theory and choice principles for classes.
\newblock In preparation.

\bibitem[GH16]{GitmanHamkins2016:OpenDeterminacyForClassGames}
Victoria Gitman and Joel~David Hamkins.
\newblock Open determinacy for class games.
\newblock In Andr\'es~E. Caicedo, James Cummings, Peter Koellner, and Paul
  Larson, editors, {\em Foundations of Mathematics, Logic at Harvard, Essays in
  Honor of Hugh Woodin's 60th Birthday}, AMS Contemporary Mathematics. 2016.
\newblock Newton Institute preprint ni15064.

\bibitem[GHH{\etalchar{+}}17]{GitmanHamkinsHolySchlichtWilliams:The-exact-strength-of-the-class-forcing-theorem}
Victoria Gitman, Joel~David Hamkins, Peter Holy, Philipp Schlicht, and Kameryn
  Williams.
\newblock The exact strength of the class forcing theorem.
\newblock {\em ArXiv e-prints}, July 2017.
\newblock manuscript under review.

\bibitem[GHJ16]{GitmanHamkinsJohnstone2016:WhatIsTheTheoryZFC-Powerset?}
Victoria Gitman, Joel~David Hamkins, and Thomas~A. Johnstone.
\newblock What is the theory {ZFC} without {Powerset}?
\newblock {\em Math.~Logic Q.}, 62(4--5):391--406, 2016.

\bibitem[HKS18]{PeterHolyRegulaKrapfPhilippSchlicht:classforcing2}
Peter Holy, Regula Krapf, and Philipp Schlicht.
\newblock Characterizations of pretameness and the {O}rd-cc.
\newblock {\em Ann. Pure Appl. Logic}, 169(8):775--802, 2018.

\bibitem[Kar18]{Karagila2018:Fodors-lemma-can-fail-everywhere}
A.~Karagila.
\newblock Fodor's lemma can fail everywhere.
\newblock {\em Acta Math. Hungar.}, 154(1):231--242, 2018.

\bibitem[KS20]{SchlichtKaragila}
Asaf Karagila and Philipp Schlicht.
\newblock How to have more things by forgetting how to count them.
\newblock {\em Proceedings of the Royal Society A: Mathematical, Physical and
  Engineering Sciences}, 476(2239):20190782, 2020.

\bibitem[Mar73]{Marek:KM}
W.~Marek.
\newblock On the metamathematics of impredicative set theory.
\newblock {\em Dissertationes Math. (Rozprawy Mat.)}, 98:40, 1973.

\bibitem[Neu51]{Neumer1951:Verallgemeinerung-eines-Satzes-von-Alexandroff-und-Urysohn}
Walter Neumer.
\newblock Verallgemeinerung eines {S}atzes von {A}lexandroff und {U}rysohn.
\newblock {\em Mathematische Zeitschrift}, 54(3):254--261, Sep 1951.

\bibitem[Wil18]{williams:dissertation}
Kameryn~J. Williams.
\newblock {\em The {S}tructure of {M}odels of {S}econd-{O}rder {S}et
  {T}heories}.
\newblock ProQuest LLC, Ann Arbor, MI, 2018.
\newblock Thesis (Ph.D.)--City University of New York.

\end{thebibliography}
\end{document}